\newif\ifcomments
\newcommand{\bsh}{\backslash}
\newcommand{\amod}{\!\! \pmod}
\newcommand{\legendre}[2]{\genfrac{(}{)}{}{}{#1}{#2}}
\newcommand{\wtE}{\widetilde{E}}
\newcommand{\wtH}{\widetilde{H}}
\DeclareMathOperator{\Emb}{Emb}
\DeclareMathOperator{\Aut}{Aut}
\theoremstyle{plain} 
\newtheorem{theorem}{Theorem} 
\newtheorem{proposition}[theorem]{Proposition}
\newtheorem{lemma}[theorem]{Lemma}
\newtheorem{corollary}[theorem]{Corollary}
\theoremstyle{definition}
\theoremstyle{remark}
\newtheorem{remark}[theorem]{Remark}
\newtheorem*{acknowledgement}{Acknowledgements}
\numberwithin{theorem}{section}
\newcommand{\CC}{\mathbb{C}}
\newcommand{\ZZ}{\mathbb{Z}}
\newcommand{\QQ}{\mathbb{Q}}
\newcommand{\FF}{\mathbb{F}}
\newcommand{\RR}{\mathbb{R}}
\newcommand{\Fp}{\mathbb{F}_p}
\newcommand{\Fpp}{\mathbb{F}_{p^2}}
\newcommand{\grj}{\mathfrak{j}}
\newcommand{\gra}{\mathfrak{a}}
\newcommand{\grP}{\mathfrak{P}}
\newcommand{\calO}{\mathcal{O}}
\newcommand{\calM}{\mathcal{M}}
\newcommand{\calT}{\mathcal{T}}
\newcommand{\calR}{\mathcal{R}}
\newcommand{\scrR}{\mathscr{R}}
\newcommand{\calH}{\mathcal{H}}
\newcommand{\Fpbar}{\overline{\mathbb{F}}_p}
\newcommand{\End}{\operatorname{End}}
\newcommand{\disc}{\operatorname{disc}}
\newcommand{\nrd}{\operatorname{nrd}}
\newcommand{\typ}{\operatorname{Typ}}
\newcommand{\opt}{\mathrm{opt}}
\newcommand{\Qbar}{\overline{\mathbb{Q}}}
\DeclareMathOperator{\Cl}{Cl}
\DeclareMathOperator{\ells}{\mathcal{E}\ell\ell}
\newcommand{\els}{\mathcal{E}\ell\ell_{/\Fpbar}^{ss}}
\DeclareMathOperator{\Pic}{Pic}
\definecolor{Bittersweet}{rgb}{1.0, 0.44, 0.37}
\newcommand{\MC}[1]{\textcolor{Bittersweet}{{\sf Mingjie:} {\sl{#1}}}}
\newcommand{\MC}[1]{}
\title{On $\mathbb{F}_p$-roots of the Hilbert class polynomial modulo $p$}
\author{Mingjie Chen, Jiangwei Xue}
\date{\today}
\keywords{Hilbert class polynomial, supersingular elliptic curve, endomorphism ring, quaternion algebra, Picard group}
\subjclass[2020]{
14H52, 
11G20, 
11R52, 
11G15. 
}
\thanks{
Mingjie Chen was supported by NSF grants DMS-1844206, DMS-1802161.
}
\address{(Chen) Department of Mathematics, University of California San Diego, 9500 Gilman Drive, La Jolla, CA 92093-0112}
\email{mic181@ucsd.edu}
\address{(Xue) Collaborative Innovation Center of Mathematics, School of Mathematics and Statistics, Wuhan University, Luojiashan, 430072, Wuhan, Hubei, P.R. China}   
\address{(Xue) Hubei Key Laboratory of Computational Science (Wuhan
   University), Wuhan, Hubei,  430072, P.R. China.}
 \email{xue\_j@whu.edu.cn}
\begin{document}

\maketitle
\begin{abstract}
    The Hilbert class polynomial $H_{\mathcal{O}}(x)\in \ZZ[x]$ attached to an order $\mathcal{O}$ in an imaginary quadratic field $K$  is the monic polynomial whose roots are precisely the distinct $\grj$-invariants of elliptic curves over $\CC$  with complex multiplication by $\mathcal{O}$. Let $p$ be a prime inert in $K$  and strictly greater than  $|\disc(\mathcal{O})|$.   We show that the number of $\Fp$-roots of $H_\mathcal{O}(x)\amod{p}$ is either zero or $|\Pic(\mathcal{O})[2]|$ by exhibiting a free and  transitive action of $\Pic(\mathcal{O})[2]$ on the set of $\Fp$-roots of $H_\mathcal{O}(x)\amod{p}$ whenever it is nonempty.  We also provide a concrete criterion for the existence of $\Fp$-roots. A similar result was first obtained by Xiao et al.~\cite{supersingular_j_invariants} and generalized much further by Li et al.~\cite{li2021factorization} (that covers the current result) with a different approach.
\end{abstract}

\section{Introduction}
Let $\mathcal{O}$ be an order in an imaginary quadratic field $K$, and $\Pic(\mathcal{O})$ be the Picard group of $\mathcal{O}$, i.e.~the group of isomorphism classes of invertible fractional $\mathcal{O}$-ideals under multiplication.  The Hilbert class polynomial $H_{\mathcal{O}}(x)$ attached to $\mathcal{O}$ is defined to be   \begin{equation}\label{eq:hilbert}
H_{\mathcal{O}}(x)=\prod_{[\mathfrak{a}]\in \Pic(\mathcal{O})}(x-\grj(\CC/\mathfrak{a})),
\end{equation}
where $[\gra]$ denotes the isomorphism class of the invertible fractional  $\calO$-ideal $\gra$, and 
$\grj(\CC/\mathfrak{a})$ stands for the $\grj$-invariant of the complex elliptic curve $\CC/\mathfrak{a}$. It is well known that $H_{\mathcal{O}}(x)$ has integral coefficients, and it is irreducible over $\QQ$ (see \cite[\S13]{coxprimes} and \cite[Chapter~10, App., p.144]{lang-ell-func}).  


Let $p\in \mathbb{N}$ be a prime number, and $\wtH_\mathcal{O}(x)\in \FF_p[x]$ be the polynomial   obtained by  reducing  $H_\mathcal{O}(x)\in \ZZ[x]$ modulo $p$. Suppose that $p$ is non-split in $K$ so that  the roots of $\wtH_\mathcal{O}(x)$ are supersingular $\grj$-invariants, which are known to  lie in $\mathbb{F}_{p^2}$. It's natural to ask how many of them are actually in $\mathbb{F}_p$. Castryck, Panny, and Vercauteren answered this question in \cite[Theorem 26]{CPV} for special cases when $p\equiv 3\amod{4}$, $K$ is of the form $\mathbb{Q}(\sqrt{-l})$ with $l$ prime, $l<(p+1)/4$ and $\mathcal{O}$ is an order containing $\sqrt{-l}$. Their method as in \cite[Section 5.2]{CPV} counts the $\Fp$-roots by constructing supersingular elliptic curves over $\Fp$. We take a different approach here by reinterpreting the $\Fp$-roots in terms of quaternion orders,  which allows us to answer the question in more generality.

Our main result is as follows.

\begin{theorem}\label{thm:main_1}
Let $K$ be an imaginary quadratic field and  $\calO$ be an order in  $K$.  Let $p$ be a prime  inert in $K$ and strictly greater than $|\disc({\mathcal{O}})|$, and  $\mathcal{H}_p$ be  set of $\Fp$-roots of 
$\wtH_\mathcal{O}(x)$.  If $\mathcal{H}_p$ is nonempty, then it admits a regular (i.e.~free and transitive) action by the $2$-torsion subgroup $\Pic(\mathcal{O})[2]\subset \Pic(\mathcal{O})$.  In particular,  the number of $\Fp$-roots of $\wtH_\mathcal{O}(x)$ is either zero or $|\Pic(\mathcal{O})[2]|$.

Moreover, $\mathcal{H}_p \neq \emptyset$ if and only if for every prime factor $\ell$ of $\disc(\calO)$,  either condition (i) or (ii) below holds for $\ell$ depending on  its parity: 
 \begin{enumerate}
     \item[(i)] $\ell \neq 2$ and the Legendre symbol $\legendre{-p}{\ell}=1$;
     \item[(ii)]  $\ell = 2$ and one of the following conditions holds:
         \subitem (a) $p \equiv 7 \amod{8}$;
         \subitem (b) $-p + \frac{\disc(\mathcal{O})}{4} \equiv 0,\,1 $ or $4 \amod{8}$;
         \subitem (c) $-p + \disc(\mathcal{O})\equiv 1 \amod{8}$.
 \end{enumerate}
\end{theorem}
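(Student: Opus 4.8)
The plan is to transport both statements into the quaternion algebra $B=B_{p,\infty}$ ramified exactly at $p$ and $\infty$. Because $p$ is inert in $K$, the field $K$ embeds into $B$ and one may write $B=K\oplus K\pi$ with $\pi^2=-p$ and $\pi\alpha=\bar\alpha\,\pi$ for all $\alpha\in K$, where $\alpha\mapsto\bar\alpha$ is the nontrivial automorphism of $K$. Here $\pi$ models the $p$-power Frobenius: a supersingular $\grj$-invariant lies in $\Fp$ exactly when its curve descends to $\Fp$, in which case (as $p>3$) the Frobenius $\phi$ has trace $0$ and $\phi^2=-p$, so $\phi\in\End(E)$ and $\ZZ[\phi]\cong\ZZ[\sqrt{-p}]$. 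By Deuring's correspondence a root $j$ of $\wtH_\calO$ corresponds to a supersingular $E$ whose endomorphism ring is a maximal order admitting an optimal embedding of $\calO$; the extra requirement $j\in\Fp$ adds a Frobenius $\pi\in\End(E)$. The first step is therefore to establish a bijection between $\calH_p$ and the set of maximal orders $\calR\subseteq B$ with $\calR\supseteq\calR_0:=\calO+\calO\pi$ and $\calR\cap K=\calO$, taken modulo conjugation by $K^\times$ (the choices $\pm\pi$ of Frobenius collapse because $j=j^p$). Verifying that ``an optimal copy of $\calO$ plus a Frobenius'' is the same datum as ``$\calR\supseteq\calR_0$ with $\calO$ optimal,'' and that the correct equivalence is $K^\times$-conjugacy, is the first technical point.

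For the regular action I would let an invertible $\calO$-ideal $\mathfrak a$ act by conjugation $\calR\mapsto\mathfrak a^{-1}\calR\mathfrak a$, which carries maximal orders to maximal orders and fixes $\calO=\calR\cap K$. The effect on the Frobenius part is the crux: since every $\alpha\in K$ commutes with $\mathfrak a$ while $\mathfrak a^{-1}\pi\mathfrak a=(\mathfrak a^{-1}\bar{\mathfrak a})\pi$, one finds $\mathfrak a^{-1}\calR_0\mathfrak a=\calO+\calO\,c\pi$ where $c\in K^\times$ generates the class $[\mathfrak a]^{-2}$. From $(c\pi)^2=N(c)\,\pi^2=-N(c)\,p$ the Frobenius relation $(c\pi)^2=-p$ survives if and only if $N(c)=1$, i.e.\ if and only if $[\mathfrak a]^2=1$ in $\Pic(\calO)$. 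This is exactly the mechanism forcing the naive $\Pic(\calO)$-action to descend to $\Pic(\calO)[2]$. I would then check freeness (the stabilizer of an optimal embedding is controlled by $\calO^\times=\{\pm1\}$ and becomes trivial on classes) and transitivity (any two maximal orders realizing a Frobenius-compatible optimal copy of $\calO$ differ by such an ideal conjugation, via the classical transitivity of the $\Pic(\calO)$-action on optimal embeddings). Freeness and transitivity give at once $|\calH_p|\in\{0,\ |\Pic(\calO)[2]|\}$.

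For the existence criterion I would show that $\calH_p\neq\emptyset$ if and only if $\calR_0$ extends to a \emph{global} maximal order $\calR$ with $\calR\cap K=\calO$. Since, by Eichler's embedding-number theory, the total count of such maximal orders is a positive multiple of a product of local factors, nonemptiness is equivalent to local solvability at every prime $\ell$, and it suffices to decide when $\calR_{0,\ell}=\calO_\ell+\calO_\ell\pi$ extends to a maximal order of $B_\ell$ keeping $\calO_\ell$ optimal. At $\ell=p$ the order $\calR_{0,p}$ is already the maximal order of the local division algebra ($K_p/\QQ_p$ unramified, $\pi$ a uniformizer), and at $\ell\nmid 2p\,\disc(\calO)$ there is nothing to check; the content is concentrated at the odd primes $\ell\mid\disc(\calO)$ and at $\ell=2$. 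In $B_\ell\cong M_2(\QQ_\ell)$ a direct computation of when $\calO_\ell+\calO_\ell\pi$ admits such a maximal, optimality-preserving extension reduces, for odd $\ell$, to the solvability of $\pi^2=-p$ compatibly with the local structure of $\calO_\ell$, i.e.\ to $\legendre{-p}{\ell}=1$. I expect the genuine obstacle to be $\ell=2$: there the interplay of the $2$-part of the conductor, of $\disc(\calO)\bmod 8$, and of $-p\bmod 8$ must be tracked by hand, which is precisely what produces the three separate congruence conditions (a)--(c) and what makes this the step least amenable to a uniform argument.
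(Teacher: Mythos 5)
Your strategy is in essence the paper's own (re-express ``$\Fp$-root'' as ``maximal order optimally containing $\calO$ together with an element squaring to $-p$'', act by conjugation by ideals, and decide existence locally), but two load-bearing steps are false or missing as written. The first is your opening structural claim: inertness of $p$ does \emph{not} allow one to write $B_{p,\infty}=K\oplus K\pi$ with $\pi^2=-p$ and $\pi\alpha=\bar\alpha\pi$. Choosing such a $\pi$ is precisely the same as an isomorphism $\left(\frac{-d,-p}{\QQ}\right)\simeq B_{p,\infty}$, which by Lemma~\ref{lem:isom-B-p-infty} holds if and only if $\legendre{-p}{\ell}=1$ for every odd prime $\ell\mid d$ --- one of the very obstructions the theorem is detecting. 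It can fail under all the hypotheses of the theorem: for $K=\QQ(\sqrt{-5})$, $\calO=\calO_K$ (so $\disc(\calO)=-20$) and $p=37$, the prime $p$ is inert and $p>|\disc(\calO)|$, yet $\legendre{-37}{5}=-1$. In such cases your global order $\calR_0=\calO+\calO\pi$ simply does not exist inside $B_{p,\infty}$, so your formulation of the nonemptiness criterion (``$\calR_0$ extends to a global maximal order iff it does so locally everywhere'') is not even well-posed exactly in the cases where the criterion must return ``empty''. The paper never assumes this decomposition: it \emph{deduces} it, when $\calH_p\neq\emptyset$, in Proposition~\ref{prop:Sopt}, where Kaneko's inequality together with $p>|\disc(\calO)|$ forces any $\alpha\in\calR$ with $\alpha^2=-p$ to induce complex conjugation on the optimally embedded $\calO$; this is also the only place the hypothesis $p>|\disc(\calO)|$ genuinely enters (here and in Corollary~\ref{cor:no-repeat-roots}), and your sketch never identifies where that hypothesis is used.

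The second gap is the converse half of the $2$-torsion mechanism, which is where the real work of Proposition~\ref{prop:picO2} lies. Your identity $\gra^{-1}\calR_0\gra=\calO+\calO c\pi$ with ``$c$ generating $[\gra]^{-2}$'' is wrong on two counts: the conjugate of $\calO+\calO\pi$ is $\calO+\gra^{-1}\bar\gra\,\pi$, which has the form $\calO+\calO c\pi$ only when $\gra^{-1}\bar\gra$ is already principal --- i.e.\ only when $[\gra]^2=1$, the statement to be proved --- and in any case the relevant orders are \emph{maximal} orders strictly containing $\calO+\calO\pi$ at primes dividing $\disc(\calO)$, so an element $\alpha$ with $\alpha^2=-p$ need not a priori lie in $\gra^{-1}\bar\gra\,\pi$, nor even in $K\pi$. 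Consequently ``the Frobenius relation survives iff $N(c)=1$ iff $[\gra]^2=1$'' is a restatement of the desired equivalence, not a proof of it. The paper's argument supplies exactly what is missing: first the anticommutation forced by Kaneko's inequality (so $\alpha=ja$ with $N_{K/\QQ}(a)=1$), then the prime-by-prime verification that $a\gra^{-1}\bar\gra\subseteq\calR$ (with a separate computation at $p$ using the norm description of the maximal local order), then optimality $\calR\cap K=\calO$ to get $a\bar\gra\subseteq\gra$, and finally a discriminant comparison to upgrade this to $a\bar\gra=\gra$. Two further points would also need repair: conjugation by $K^\times$ does not preserve the set of maximal orders containing $\calR_0$ (since $c^{-1}\pi c=(c^{-1}\bar c)\pi$), so your proposed equivalence relation on that set is not well defined --- the paper avoids this by counting $\calH_p$ on the elliptic-curve side, where the $\Pic(\calO)$-action is manifestly regular and reduction is injective; and the local computations at $\ell\mid\disc(\calO)$, in particular the entire case $\ell=2$, are deferred rather than done, whereas in the paper they occupy Lemmas~\ref{lem:Sopt-nonempty} and~\ref{lem:-p=norm} (reduction to $-p\in N_{K/\QQ}(\calO_\ell^\times)$ via a lattice/Gorenstein argument, then an explicit Hensel computation).
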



The assumption that $|\disc(\calO)| < p$ immediately implies  that $p$ does
not divide the discriminant of $H_\mathcal{O}(x)$ by an influential
work of Gross and Zagier \cite{GrossZagier}.  Therefore,
$\wtH_\mathcal{O}(x)$ has no repeated roots. We provide an
alternative proof of this fact under the current assumptions in Corollary~\ref{cor:no-repeat-roots}.


\begin{remark}
  After the first of version of this manuscript appeared on the web, Jianing Li kindly informed us that a similar result to Theorem~\ref{thm:main_1} has firstly been obtained in \cite[Theorem 1.1]{supersingular_j_invariants} under the assumption that $|\disc(\calO)|<4\sqrt{p/3}$. Moreover,   Li et al.~used a method similar to \cite{supersingular_j_invariants} and generalized it much further in a joint work \cite{li2021factorization}. Their result is as follows. 
  Let $\grj_0=\grj(\CC/\calO)$, and put $L:=\QQ(\grj_0)$. If $p$ coprime to the index $[\calO_L: \ZZ[\grj_0])$ (e.g.~if $p\nmid \disc(\calO)$), then they completely determined the factorization of $\wtH_\calO(x)$ in $\Fp[x]$. Partial results are also obtained without the co-primality condition. In particular, the results of Theorem~\ref{thm:main_1} has been covered in \cite[Theorem~4.1]{li2021factorization}. On the other hand, 
  the current project was initiated in May 2021 during an online discussion between the authors.  Unaware of the significant progress made by aforementioned works, we worked independently and obtained Theorem~\ref{thm:main_1} by a completely different method: we count the $\Fp$-roots by demonstrating a regular action using quaternion orders, whereas the aforementioned works count by studying the factorization of $p$ in $L$.
\end{remark}

For the reader's convenience, we reproduce the celebrated formula of Gauss on the order of $\Pic(\mathcal{O})[2]$.


\begin{theorem}[{{\cite[Proposition 3.11]{coxprimes}}}] Let $r$ be the number of odd primes dividing $\disc(\mathcal{O})$. Define the number $\mu$ as follows: if $\disc(\mathcal{O})\equiv 1 \amod{4}$, then $\mu = r$, and if $\disc(\mathcal{O})\equiv 0 \amod{4}$, then $\disc(\mathcal{O}) = -4n$, where $n>0$, and $\mu$ is determined as follows:

\begin{center}
$\mu = 
\begin{cases}
r  \  \; &\text{if} \;  n \equiv 3 \amod{4};\\
r+1 \  \; &\text{if} \;  n \equiv 1,2 \amod{4};\\
r+1 \  \; &\text{if} \;  n \equiv 4 \amod{8};\\
r+2 \  \; &\text{if} \;  n \equiv 0 \amod{8}.
\end{cases} $
\end{center}
Then $|\Pic(\calO)[2]|=2^{\mu-1}$.
\end{theorem}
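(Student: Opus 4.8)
The plan is to prove the formula $|\Pic(\mathcal{O})[2]|=2^{\mu-1}$ entirely through the genus theory of binary quadratic forms, which has the advantage of treating maximal and non-maximal orders uniformly. Write $D=\disc(\mathcal{O})<0$ and identify $\Pic(\mathcal{O})$ with the form class group $C(D)$ of primitive positive-definite binary quadratic forms of discriminant $D$ under proper equivalence (Gauss composition; this is classical). Since $\Pic(\mathcal{O})$ is a finite abelian group, the squaring endomorphism shows $|\Pic(\mathcal{O})[2]|=[\Pic(\mathcal{O}):\Pic(\mathcal{O})^2]$, so it suffices to compute the number of genera, i.e.\ the index of the subgroup of squares.

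First I would introduce the total genus character $\Phi\colon\Pic(\mathcal{O})\to\{\pm1\}^{\mu}$ whose components are the assigned characters of $D$: for each odd prime $\ell\mid D$ the Legendre character $[f]\mapsto\legendre{a}{\ell}$, where $a$ is any integer prime to $\ell$ represented by the form $f$, together with the supplementary characters $\delta(a)=(-1)^{(a-1)/2}$ and $\epsilon(a)=(-1)^{(a^2-1)/8}$ at the prime $2$, each included exactly when $D$ forces it. One checks routinely that these are well defined on classes. The two facts that drive the argument are: (a) $\ker\Phi=\Pic(\mathcal{O})^2$, the principal genus theorem; and (b) the image of $\Phi$ is precisely the index-two subgroup of $\{\pm1\}^{\mu}$ cut out by a single multiplicative relation among the assigned characters, which is a form of quadratic reciprocity (equivalently Hilbert reciprocity $\prod_v(a,D)_v=1$), the inclusion into this subgroup being reciprocity and the surjectivity onto it being the realizability of every admissible vector of character values by some form (Dirichlet). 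Granting (a) and (b), $\Pic(\mathcal{O})/\Pic(\mathcal{O})^2\cong\operatorname{im}\Phi$ has order $2^{\mu-1}$, which is the desired count.

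It then remains to check that the number $\mu$ of assigned characters agrees with the tabulated value. The $r$ odd primes dividing $D$ each contribute one Legendre character, accounting for the base count $r$; when $D\equiv1\amod{4}$ neither supplementary character is active, so $\mu=r$. The delicate part is the prime $2$: writing $D=-4n$ when $D\equiv0\amod{4}$, which of $\delta,\epsilon$ are active is dictated by $n\bmod 8$—none when $n\equiv3\amod{4}$, exactly one when $n\equiv1,2\amod{4}$ or $n\equiv4\amod{8}$, and both when $n\equiv0\amod{8}$—which reproduces the four cases $\mu=r,\,r+1,\,r+1,\,r+2$. This is a finite congruence check, and it is precisely here that care is needed, since the behaviour of $\delta$ and $\epsilon$ on represented odd integers is what separates $\mu=r+1$ from $\mu=r+2$.

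The main obstacle will be fact (a), the principal genus theorem. The inclusion $\Pic(\mathcal{O})^2\subseteq\ker\Phi$ is immediate, because a residue represented by a square form is, up to the values of the principal form, a square modulo each assigned prime; but the reverse inclusion is the substantive content of genus theory. I would establish it either via Gauss composition together with the reduction theory of ternary quadratic forms, or—more efficiently—by identifying the genus field (the fixed field of $\Pic(\mathcal{O})^2$ inside the ring class field $H_{\mathcal{O}}$, equivalently the maximal subextension of $H_{\mathcal{O}}/K$ that is abelian over $\QQ$) and letting class field theory force $\Gal$ of the genus field over $K$ to be $\Pic(\mathcal{O})/\Pic(\mathcal{O})^2$ with the assigned characters as its ramification characters. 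An alternative route that avoids (a) altogether is the ambiguous class number formula: since complex conjugation acts on $\Pic(\mathcal{O})$ as inversion, one has $\Pic(\mathcal{O})[2]=\Pic(\mathcal{O})^{\sigma}$, and a direct Tate-cohomology computation of the fixed classes in terms of the ramified primes and the unit group yields $2^{\mu-1}$ directly.
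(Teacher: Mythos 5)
The first thing to say is that the paper does not prove this statement at all: it is reproduced verbatim from Cox \cite[Proposition~3.11]{coxprimes} purely for the reader's convenience, so there is no internal proof to measure your argument against; the relevant benchmark is the classical genus theory behind the cited source, and your sketch is essentially that argument. Your outline is sound: the identification of $\Pic(\calO)$ with the form class group of discriminant $\disc(\calO)$, the reduction $|\Pic(\calO)[2]|=[\Pic(\calO):\Pic(\calO)^2]$ via the squaring endomorphism, the total character map $\Phi$, and the congruence bookkeeping reproducing the table for $\mu$ (including the $2$-adic cases that separate $\mu=r+1$ from $\mu=r+2$) are all correct, and you have correctly located the two nontrivial pillars: the principal genus theorem $\ker\Phi=\Pic(\calO)^2$, and the determination of $\operatorname{im}\Phi$ as the index-two subgroup of $\{\pm1\}^{\mu}$ (inclusion by reciprocity, surjectivity by Dirichlet). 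Deferring these to classical sources is reasonable when the goal is to reconstruct a cited nineteenth-century theorem, but be aware that they carry all of the depth, and that both must be known for \emph{nonmaximal} orders: your class-field-theoretic route must be run with the ring class field of $\calO$ (as you note), using that the Galois group of the ring class field over $\QQ$ is generalized dihedral---complex conjugation inverts $\Pic(\calO)$ because $\gra\bar{\gra}=N_{K/\QQ}(\gra)\calO$ for invertible $\gra$---so that $\Pic(\calO)^2$ is exactly the commutator subgroup and the genus field is the maximal subextension abelian over $\QQ$. The same caveat applies to your closing ``ambiguous class'' alternative: the textbook Tate-cohomology/ambiguous-class-number computation is stated for the maximal order, and adapting it to an arbitrary conductor requires precisely the ring-class-field bookkeeping above, so it is not actually a shortcut.
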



This paper is organized as follows. In section \ref{sec:Fp-roots}, we
give a reinterpretation of $\mathcal{H}_p$ in terms of quaternion orders. In section
\ref{sec:free_transitive}, we show that there is a regular action of
$\Pic(\mathcal{O})[2]$ on $\mathcal{H}_p$ whenever
$\calH_p\neq \emptyset$, and provide a nonemptiness criterion for
$\calH_p$.  
Throughout the paper, the prime $p\in \mathbb{N}$ is assumed to be non-split in $K$.  The notation $B_{p, \infty}$ is reserved for the
unique quaternion $\QQ$-algebra ramified precisely at $p$ and infinity. Given a set $X$ and an  equivalence relation on $X$, the equivalence class of an element $x\in X$ is denoted by $[x]$.


\begin{acknowledgement}
The first author would like to thank WIN5 research group members Sarah Arpin, Kristin Lauter, Renate Scheidler, Katherine Stange and Ha Tran for helpful comments on some initial ideas on this problem. The first author would also like to thank Kiran Kedlaya and Rachel Pries for helpful suggestions, and Nandagopal Ramachandran for helpful discussions. The second author thanks Chia-Fu Yu for helpful discussions. 
\end{acknowledgement}

\section{Reinterpretation of the $\Fp$-roots}\label{sec:Fp-roots}

As mentioned before, we are going to reinterpret the
$\Fp$-roots of $\wtH_\mathcal{O}(x)$ in terms of quaternion
orders.  For this purpose,  we first describe more concretely the reduction of singular moduli with complex multiplication by $\calO$. Assume that the prime $p$  is non-split in $K$. For the moment, we make no assumption
on the discriminant of the order $\calO\subset K$. 

Let $\ells(\calO)$ be the set of isomorphism classes of elliptic
curves over $\Qbar$ with complex multiplication by $\calO$. It is
canonically identified with the singular $\grj$-invariants with
complex multiplication by $\calO$ (i.e.~the roots of
$H_\calO(x)\in \ZZ[x]$). The Picard group $\Pic(\calO)$ acts regularly
on $\ells(\calO)$ via $\gra$-transformation \cite[\S7]{Shimura-CM} and
\cite[\S1]{milne2007fundamental}:
\begin{equation}
  \label{eq:1}
  \Pic(\calO)\times \ells(\calO)\to \ells(\calO),\qquad ([\gra],
  E)\mapsto E^\gra. 
\end{equation}
More concretely, if we pick $\gra$ to be an integral ideal of $\calO$ 
and write $E[\gra]$ for the finite group scheme $\cap_{a\in
  \gra}E[a]$,  
then $ E^\gra=E/E[\gra]$ by \cite[Corollary~A.4]{waterhouse_thesis}. Here $E[a]=\ker(E\xrightarrow{a}E)$. See \cite[Proposition~1.26]{milne2007fundamental}
  and \cite[Appendix]{waterhouse_thesis} for the functorial characterization
  of $E^\gra$. Alternatively, since $\gra$ is an invertible $\calO$-ideal, 
  $E^\gra$ can also be identified canonically with the Serre tensor construction
  $\gra^{-1}\otimes_\calO E$ (see \cite[\S1]{Amir-Khosravi} and
  \cite[\S1.7.4]{Chai-Conrad-Oort}). Fix a member $E_0\in
\ells(\calO)$. The regular action in (\ref{eq:1}) gives rise to a
$\Pic(\calO)$-equivariant bijection $\xi: \ells(\calO)\to
\Pic(\calO)$ that sends $E_0$ to the identity element $[\calO]\in
\Pic(\calO)$.

Similarly, let $\els$ be the set of isomorphism classes of
supersingular elliptic curves over $\Fpbar$, which is canonically
identified with the set of supersingular $\grj$-invariants in
$\Fpp$. From \cite[Theorem~V.3.1]{Silverman-AEC-I}, an elliptic curve
$E/\Fpbar$ is supersingular if and only if its endomorphism algebra
$\End^0(E):=\End(E)\otimes \QQ$ is a quaternion $\QQ$-algebra.  Assume
that this is the case.  Then $\End^0(E)$ coincides with the unique
quaternion $\QQ$-algebra $B_{p, \infty}$ ramified precisely at $p$ and
infinity, and $\End(E)$ is a maximal order in $\End^0(E)$ by
\cite[Theorem~4.2]{waterhouse_thesis}. For simplicity, put
$B:=B_{p, \infty}$ and let $\typ(B)$ be the \emph{type set} of $B$,
that is, the set of isomorphism (i.e.~$B^\times$-conjugacy) classes of maximal orders in
$B$. We obtain the following canonical map, which is known to be surjective \cite[Corollary~42.2.21]{voight}: 
\begin{equation}
  \label{eq:6}
 \rho: \els\twoheadrightarrow \typ(B), \qquad E\mapsto [\End(E)].   
\end{equation}
Let $\calR$ be a maximal order in $B$, and $\Cl(\calR)$ be its
 left ideal class set, that is, the set of isomorphism (i.e.~right $B^\times$-equivalent) classes of
fractional left ideals of $\calR$ in $B$.  Given a fractional left ideal $I$ of
$\calR$, we write $\calR_r(I)$ for the right order of $I$, which is  defined as follows:
\[\calR_r(I):=\{x\in B\mid Ix\subseteq I\}.\]
Sending a fractional left $\calR$-ideal to its right order  induces a surjective map
\begin{equation}
  \label{eq:5}
  \Upsilon: \Cl(\calR)\twoheadrightarrow \typ(B), \qquad [I]\mapsto [\calR_r(I)]. 
\end{equation}

The Deuring correspondence
\cite[Corollary~42.3.7]{voight} establishes a bijection between
$\Cl(\calR)$ and $\els$. One direction of this correspondence goes as
follows. From the surjectivity of $\rho$, we may always fix 
$E_\calR\in \els$ such that $\End(E_\calR)=\calR$. Then the member of
$\els$ corresponding to a left ideal class $[I]\in \Cl(\calR)$ is
the  $I$-transform $E_\calR^I$ of $E_\calR$.  If $I$ is chosen to be
an integral left ideal of $\calR$, then $E_\calR^I$ 
can be identified with the 
quotient $E_\calR/E_\calR[I]$ by
\cite[Corollary~A.4]{waterhouse_thesis} again. From
\cite[Corollary~42.3.7]{voight}, we have \begin{equation}\label{eq:order}
    \End(E_\calR^I)\simeq
\calR_r(I). 
\end{equation}

Let $\grP$ be a place of $\Qbar$ lying above $p$, and   $r_\grP:
\ells(\calO)\to \els$ be the reduction map modulo $\grP$. For each
$E\in \ells(\calO)$, we write $\wtE$ for the reduction of $E$ modulo
$\grP$. From
\cite[\S9.2]{lang-ell-func}, reducing $E_0$ modulo $\grP$ gives rise to an embedding
$\iota:\calO\hookrightarrow \calR_0:=\End(\wtE_0)$.   
By an abuse of notation, we still
write $\iota$ for both of the following two induced maps:
\begin{equation}
  \label{eq:4}
K\hookrightarrow
B\quad \text{and}\quad  \Pic(\calO)\xrightarrow{[\gra]\mapsto
  [\calR_0\iota(\gra)]} \Cl(\calR_0).
\end{equation}
For simplicity,  we  identify $K$ with its image in $B$ via $\iota$ and
 write $\calR_0 \gra$ for $\calR_0 \iota(\gra)$. 


Now we are ready to give a concrete description of  $r_\grP:\ells(\calO)\to \els$.

\begin{proposition}\label{prop:diagram}
The reduction map $r_\grP$ fits into a commutative diagram as
follows:
\[
  \begin{tikzcd}
    \ells(\calO)\ar[d, leftrightarrow,
"\simeq", "\xi"'] \ar[r, "r_\grP"] & \els \ar[d, leftrightarrow,
 "\simeq", "\delta"'] \ar[rd, twoheadrightarrow, "\rho"]& \\
    \Pic(\calO)\ar[r,"\iota"] & \Cl(\calR_0)\ar[r, twoheadrightarrow, "\Upsilon"]  & \typ(B).
  \end{tikzcd}
\]
Here $\xi$ is the $\Pic(\calO)$-equivariant bijection that sends
the fixed member $E_0\in \ells(\calO)$ to  $[\calO]\in
\Pic(\calO)$, and $\delta$ is the Deuring correspondence obtained by
taking $E_{\calR_0}=\wtE_0$. 
\end{proposition}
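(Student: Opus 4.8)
The plan is to treat the diagram in two pieces: the right-hand triangle $\rho=\Upsilon\circ\delta$ and the left-hand square $\delta\circ r_\grP=\iota\circ\xi$. The triangle is essentially a restatement of the Deuring correspondence together with \eqref{eq:order}. Indeed, for a class $[I]\in\Cl(\calR_0)$ the curve $\delta^{-1}([I])$ is by construction $E_{\calR_0}^{\,I}=\wtE_0^{\,I}$, and \eqref{eq:order} gives $\End(\wtE_0^{\,I})\simeq\calR_r(I)$. Hence $\rho(\wtE_0^{\,I})=[\calR_r(I)]=\Upsilon([I])$, which is exactly $\rho=\Upsilon\circ\delta$. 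So the real content lies in the square.

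For the square I would use that $\ells(\calO)$ is a torsor under $\Pic(\calO)$ via the $\gra$-transform, so every $E\in\ells(\calO)$ may be written uniquely as $E=E_0^{\gra}$ with $\xi(E)=[\gra]$, and we may further choose the representative $\gra$ to be an integral $\calO$-ideal whose norm is prime to $p$. The two composites then read $\iota(\xi(E))=[\calR_0\gra]$ and $\delta(r_\grP(E))=\delta(\widetilde{E_0^{\gra}})$, so the entire square collapses to the single identity
\[
  \widetilde{E_0^{\gra}}\;\simeq\;\wtE_0^{\,\calR_0\gra},
\]
asserting that reduction modulo $\grP$ intertwines the $\gra$-transform upstairs with the $(\calR_0\gra)$-transform downstairs. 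Granting this, $\delta(\widetilde{E_0^{\gra}})=[\calR_0\gra]=\iota([\gra])=\iota(\xi(E))$ and the square commutes.

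To establish the displayed identity I would argue at the level of kernels, using the two-generator description of an $\calO$-ideal. Writing $\gra=\calO a_1+\calO a_2$, one has $E_0[\gra]=E_0[a_1]\cap E_0[a_2]$; and since $\iota$ is, by its very definition in \eqref{eq:4}, the map induced by reducing endomorphisms of $E_0$, the left ideal $\calR_0\gra$ equals $\calR_0 a_1+\calR_0 a_2$ (with $a_i$ acting on $\wtE_0$ through $\iota$), whence $\wtE_0[\calR_0\gra]=\wtE_0[a_1]\cap\wtE_0[a_2]$. Because $\gra$ has norm prime to $p$, reduction modulo $\grP$ is injective on the relevant torsion and preserves degrees, so it carries $E_0[a_i]$ isomorphically onto $\wtE_0[a_i]$ and commutes with the intersection, giving $\widetilde{E_0[\gra]}=\wtE_0[\calR_0\gra]$. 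As reduction also commutes with the quotient by a prime-to-$p$ finite subgroup scheme, $\widetilde{E_0^{\gra}}=\widetilde{E_0/E_0[\gra]}=\wtE_0/\wtE_0[\calR_0\gra]=\wtE_0^{\,\calR_0\gra}$, as needed.

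The hard part will be precisely this compatibility of reduction with the ideal transform. One must justify carefully that reduction is injective on prime-to-$p$ torsion, so that kernels, intersections, and quotients all behave functorially under $\widetilde{\ \cdot\ }$, and that the embedding $\iota\colon\calO\hookrightarrow\calR_0$ defining $\calR_0\gra$ is genuinely the one induced by reducing $\End(E_0)$; the former lets us pass freely between the two sides of the square, while the latter is what pins down $\iota(a_i)$ as the reduction of $a_i$. Once these properties of $r_\grP$ are in place, the remaining bookkeeping with the generators $a_1,a_2$ is routine.
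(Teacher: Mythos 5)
Your decomposition of the diagram matches the paper's: the right triangle is exactly the paper's appeal to \eqref{eq:order} and the Deuring correspondence, and that part of your argument is correct. For the left square, the paper simply cites Shimura's Proposition~15 (\S 11) that $\gra$-transforms are preserved under good reduction (with a footnote extending it from $\calO_K$ to invertible ideals of general orders), whereas you attempt a direct proof. The identity you reduce to, $\widetilde{E_0^\gra}\simeq \wtE_0^{\,\calR_0\gra}$, is indeed the crux. But your justification contains a genuine error: from ``$\gra$ has norm prime to $p$'' you infer that reduction carries $E_0[a_i]$ isomorphically onto $\wtE_0[\iota(a_i)]$ for each of the two generators $a_i$. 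This is a non sequitur. The hypothesis $p\nmid N_{K/\QQ}(\gra)$ controls $E_0[\gra]$ (which lies in $E_0[N_{K/\QQ}(\gra)]$, hence in prime-to-$p$ torsion), but it puts no constraint on the norms of individual generators. For instance $\gra=\calO=\ZZ[\omega]$ has norm $1$ yet is generated by $p$ and $1+p\omega$; for the generator $a_1=p$, the kernel $E_0[p]$ has $p^2$ points, all of which die under reduction because $\wtE_0$ is supersingular ($\wtE_0[p](\Fpbar)=0$), and $\wtE_0[\iota(p)]=\wtE_0[p]$ is infinitesimal rather than \'etale. So the asserted kernel isomorphisms can fail outright, and with them the ``commutes with intersections'' step.

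The gap is repairable in either of two ways, and then your route becomes a legitimate self-contained substitute for the Shimura citation. (1) Choose the generators, not merely the ideal: take $a_1=N_{K/\QQ}(\gra)\in\gra$, and by a local-generation/CRT argument pick $a_2\in\gra$ generating $\gra_\ell$ at every prime $\ell\mid N_{K/\QQ}(\gra)$ while being a unit in $\calO_p$; then both $a_i$ have degree prime to $p$ and your kernel-by-kernel argument is valid. (2) Better, avoid generators altogether: since $\iota(a)\circ r_\grP=r_\grP\circ a$ for \emph{every} $a\in\gra$, reduction maps $E_0[\gra]=\cap_{a\in\gra}E_0[a]$ into $\wtE_0[\calR_0\gra]=\cap_{\beta\in\calR_0\gra}\ker\beta$, and it does so injectively because $E_0[\gra]$ consists of prime-to-$p$ torsion; both sides are \'etale group schemes of the same order $N_{K/\QQ}(\gra)=\nrd(\calR_0\gra)$, so the containment is an equality, and passing to quotients gives $\widetilde{E_0^\gra}=\wtE_0^{\,\calR_0\gra}$. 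With either patch your proof is correct and, unlike the paper's, does not rest on the cited proposition of Shimura; as written, however, the step quoted above is false.
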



\begin{proof}
According to  \cite[Proposition~15, \S11]{Shimura-CM}, $\gra$-transforms are
preserved under good reductions\footnote{A priori, the statement of 
  \cite[Proposition~15, \S11]{Shimura-CM} requires that 
  $\calO=\calO_K$, the maximal order of $K$.  Nevertheless, the result here
  holds for general $\calO$ here
  since $\gra$ is an
  invertible $\calO$-ideal by our assumption.}. This implies that for
every $[\gra]\in \Pic(\calO)$, we have 
\[\widetilde{E_0^\gra}=(\wtE_0)^\gra=(\wtE_0)^{\calR_0\gra}, \]
so the  left square commutes. The right triangle commutes because of \eqref{eq:order}.
\end{proof}


\begin{corollary}\label{cor:End-Ea}
  For any $[\gra]\in \Pic(\calO)$, we have
  $\End(\widetilde{E_0^\gra})\simeq \gra^{-1}\calR_0\gra$. 
\end{corollary}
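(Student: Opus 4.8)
The plan is to read the endomorphism ring directly off the commutative diagram of Proposition~\ref{prop:diagram} combined with the Deuring correspondence formula \eqref{eq:order}, and then to finish with a short computation of a right order inside $B$. In effect, the geometric content is already packaged in the diagram, so the only genuine work is an ideal-theoretic identity in the quaternion algebra.

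First I would track $E_0^\gra$ around the left square. Since $\xi$ is $\Pic(\calO)$-equivariant and sends $E_0$ to the identity class $[\calO]$, we have $\xi(E_0^\gra)=[\gra]$, whence
\[
  \delta\big(r_\grP(E_0^\gra)\big)=\iota\big(\xi(E_0^\gra)\big)=\iota([\gra])=[\calR_0\gra].
\]
Because $r_\grP(E_0^\gra)=\widetilde{E_0^\gra}$ and $\delta$ is the Deuring correspondence normalized by $E_{\calR_0}=\wtE_0$, this says precisely that $\widetilde{E_0^\gra}\simeq \wtE_0^{\calR_0\gra}$, i.e.\ the reduction corresponds to the left ideal class of $I:=\calR_0\gra$. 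Applying \eqref{eq:order} then yields $\End(\widetilde{E_0^\gra})\simeq \calR_r(\calR_0\gra)$, so the corollary reduces to the purely algebraic identity $\calR_r(\calR_0\gra)=\gra^{-1}\calR_0\gra$.

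For that identity I would first record that $\gra^{-1}\calR_0\gra$ is genuinely an order: it is a full lattice in $B$, and because $\gra$ is an invertible $\calO$-ideal with $\gra\gra^{-1}=\calO\subseteq\calR_0$ and $K$ is commutative, one checks $\big(\gra^{-1}\calR_0\gra\big)\big(\gra^{-1}\calR_0\gra\big)=\gra^{-1}\calR_0\calO\calR_0\gra=\gra^{-1}\calR_0\gra$ together with $1\in\gra^{-1}\calO\gra=\calO\subseteq\gra^{-1}\calR_0\gra$. The inclusion $\gra^{-1}\calR_0\gra\subseteq\calR_r(\calR_0\gra)$ is then immediate, since $(\calR_0\gra)(\gra^{-1}\calR_0\gra)=\calR_0\calO\calR_0\gra=\calR_0\gra$. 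Conversely, if $x\in B$ satisfies $\calR_0\gra x\subseteq\calR_0\gra$, multiplying on the left by $\gra^{-1}$ gives $(\gra^{-1}\calR_0\gra)x\subseteq\gra^{-1}\calR_0\gra$; since $1$ lies in the order $\gra^{-1}\calR_0\gra$, this forces $x\in\gra^{-1}\calR_0\gra$, yielding the reverse inclusion and hence equality.

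I do not expect a serious obstacle: the diagram-chase is formal once equivariance of $\xi$ is invoked, and the right-order computation is short precisely because the conjugating ideal $\gra$ lives in the commutative field $K$, so that $\gra^{-1}\calO\gra=\calO$ and the usual noncommutative subtleties collapse. The only point needing a little care is to justify that $\gra^{-1}\calR_0\gra$ is an order and not merely a multiplicatively closed subset, which is exactly where invertibility of $\gra$ as an $\calO$-ideal — the standing hypothesis behind the $\gra$-transform throughout Section~\ref{sec:Fp-roots} — is used.
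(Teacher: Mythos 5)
Your proof is correct and takes essentially the same route as the paper: read $\End(\widetilde{E_0^\gra})\simeq \calR_r(\calR_0\gra)$ off the commutative diagram of Proposition~\ref{prop:diagram} together with \eqref{eq:order}, then identify $\calR_r(\calR_0\gra)=\gra^{-1}\calR_0\gra$. The paper's proof simply asserts this last right-order identity as standard, whereas you verify it explicitly (correctly, using invertibility of $\gra$ and commutativity of $K$), so the only difference is the level of detail.
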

\begin{proof}
  This follows directly  from Proposition~\ref{prop:diagram} since the
  right order of 
  $\calR_0\gra$ is precisely $\gra^{-1}\calR_0\gra$. 
\end{proof}

\begin{remark}\label{rem:conductor}
Let $\calO_K$ be the ring of integers of $K$, and $f$ be the conductor
of $\calO$ so that $\calO=\ZZ+f\calO_K$. Write $f=p^mf'$ with $p\nmid
f'$, and put $\calO':=\ZZ+f'\calO_K$. According to
\cite[Lemma~3.1]{onuki2020oriented}, 
$\iota(K)\cap \calR_0= \iota(\calO')$. 
For any invertible fractional ideal $\gra$ of $\calO$, we have
$\calR_0 \gra=(\calR_0
\calO')\gra=\calR_0(\calO'\gra)$. It follows that
the map $\iota: \Pic(\calO)\to \Cl(\calR_0)$ factors through the
following 
canonical homomorphism 
\[\varpi: \Pic(\calO)\to \Pic(\calO'),\qquad [\gra]\mapsto [\calO'\gra].\]
From this, one easily deduces that
$\wtH_\calO(x)=(\wtH_{\calO'}(x))^{|\ker(\varpi)|}$.
\end{remark}

Now assume that $\calO$ is maximal at $p$ (i.e.~$p\nmid
f$). From Remark~\ref{rem:conductor},  $\iota: \calO\to \calR_0$ is an \emph{optimal embedding} of
$\calO$ into $\calR_0$, that is, $\iota(K)\cap \calR_0= \iota(\calO)$. 
Given  an arbitrary maximal order $\calR$ of $B$, we write
$\Emb(\calO, \calR)$ for the set of optimal embeddings of $\calO$ into
$\calR$. The unit group $\calR^\times$ acts on $\Emb(\calO, \calR)$ by conjugation, and there are only finitely many
orbits. Put
$m(\calO, \calR, \calR^\times):=|\calR^\times\bsh \Emb(\calO,
  \calR)|$, the number of $\calR^\times$-conjugacy clasess of optimal embeddings from $\calO$ into $\calR$. We recall below a precise formula by   Elkies, Ono and Yang
   for the cardinality of each
  fiber of the reduction map $r_\grP:\ells(\calO)\to \els$. 

  \begin{lemma}[{\cite[Lemma~3.3]{Elkies-Ono-Yang}}]\label{lem:elkies}
   Suppose that $\calO$ is maximal at $p$. Then for any member $E\in
   \els$, we have 
\[|r_\grP^{-1}(E)|=\varepsilon\cdot m(\calO, \calR, \calR^\times), \]
where $\calR=\End(E)$, and $\varepsilon=1/2$ or $1$ according as $p$
is inert or ramified in $K$. 
\end{lemma}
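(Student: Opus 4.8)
The plan is to identify the fiber $r_\grP^{-1}(E)$ with the set $\calR^\times\backslash\Emb(\calO,\calR)$ of optimal embeddings up to conjugacy, up to the correction factor $\varepsilon$, using two classical inputs from the theory of complex multiplication: (a) for a curve of good reduction, reduction induces an injective ring homomorphism on endomorphism rings, and (b) Deuring's lifting theorem \cite[\S13]{lang-ell-func}, which lifts a supersingular curve equipped with an optimal $\calO$-action to a CM curve reducing to it. Throughout, $\calO$ is maximal at $p$ as in the hypothesis, and I write $\overline{\phantom{x}}$ for the canonical involution of $B$.

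First I would produce optimal embeddings out of the fiber. Each $E'\in r_\grP^{-1}(E)$ has good reduction $\wtE'\cong E$, so reduction embeds $\End(E')\cong\calO$ into $\End(E)=\calR$; because $\calO$ is maximal at $p$ this embedding is optimal, exactly as in the paragraph following Remark~\ref{rem:conductor}. The one subtlety is orientation: a curve with $\End\cong\calO$ carries two $\calO$-actions, interchanged by the nontrivial automorphism of $K$, and reducing both yields an \emph{unordered pair} $\{\phi,\bar\phi\}$ of optimal embeddings into $\calR$, where $\bar\phi(\alpha)=\overline{\phi(\alpha)}=\Tr_{K/\QQ}(\alpha)-\phi(\alpha)$ again lies in $\calR$. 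Conversely, Deuring lifting attaches to each optimal embedding a CM curve in the fiber inducing it, and the uniqueness built into Deuring's theorem makes $E'\mapsto\{[\phi],[\bar\phi]\}$ a bijection between $r_\grP^{-1}(E)$ and the set of unordered pairs $\{[\phi],[\bar\phi]\}$ of $\calR^\times$-conjugacy classes. (This bijection can also be extracted from the commutative diagram of Proposition~\ref{prop:diagram} together with Corollary~\ref{cor:End-Ea}.)

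It remains to count conjugacy classes inside each pair, and this is where $\varepsilon$ appears and where the real work lies. One must decide when $[\phi]=[\bar\phi]$, i.e.\ whether some $u\in\calR^\times$ satisfies $u\,\phi(\alpha)\,u^{-1}=\overline{\phi(\alpha)}$ for all $\alpha\in K$; equivalently, whether the normalizer of $\phi(K)^\times$ in $B^\times$ meets $\calR^\times$ in an element inducing the nontrivial automorphism. Since $\calR$ is maximal and $p$ is the only finite place at which $B$ ramifies, the obstruction is concentrated at $p$: in $B_p$ (the division algebra over $\QQ_p$) the conjugating element lies in $\phi(K_p)^\times\cup\phi(K_p)^\times\Pi$ with $\Pi\,\phi(\alpha)\,\Pi^{-1}=\overline{\phi(\alpha)}$, and one checks that $\Pi$ can be taken of even reduced-norm valuation --- hence a unit of $\calR_p$ --- exactly when $K_p/\QQ_p$ is ramified. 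Thus when $p$ ramifies each pair is a single class and $m(\calO,\calR,\calR^\times)=|r_\grP^{-1}(E)|$, giving $\varepsilon=1$, while when $p$ is inert the pair splits into two distinct classes and $m(\calO,\calR,\calR^\times)=2\,|r_\grP^{-1}(E)|$, giving $\varepsilon=1/2$. The main obstacle is to make the passage from this local picture to honest $\calR^\times$-conjugacy over $\QQ$ rigorous: this is precisely Eichler's theory of optimal embeddings \cite[Ch.~30]{voight}, and it is this local-to-global embedding-number bookkeeping, rather than either classical input above, that carries the weight of the argument.
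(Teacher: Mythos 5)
Your inert-case reasoning is essentially sound: a global unit $u\in\calR^\times$ with $u\phi u^{-1}=\bar\phi$ would in particular be a unit of $\calR_p$ inducing the nontrivial automorphism of $K_p$, and your valuation computation correctly rules this out when $p$ is inert. The ramified case, however, contains a genuine error. From the existence of \emph{local} units conjugating $\phi_\ell$ to $\bar\phi_\ell$ at every prime you conclude that $\phi$ and $\bar\phi$ are conjugate under the \emph{global} unit group $\calR^\times$. There is no such Hasse principle: $B$ is definite, so $\calR^\times$ is a finite group, generically $\{\pm 1\}$, and whenever $\calR^\times=\{\pm 1\}$ one has $[\phi]\neq[\bar\phi]$ automatically, no matter how $p$ behaves in $K$. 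Eichler's theory, which you invoke for the local-to-global passage, does not assert otherwise; what it gives is the trace formula $\sum_{[I]\in\Cl(\calR_0)}m\bigl(\calO,\calR_r(I),\calR_r(I)^\times\bigr)=h(\calO)\prod_\ell m_\ell$, an identity \emph{averaged} over the ideal class set, and the gap between that average and the pointwise count at one fixed order is exactly what your argument elides.

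Concretely, take $K=\QQ(\sqrt{-23})$, $\calO=\calO_K$, $p=23$ (ramified, $\calO$ maximal at $p$, $h(\calO)=3$). Then $\wtH_\calO(x)=(x-1728)(x-19)^2$ in $\FF_{23}[x]$, so the fiber of $r_\grP$ over the supersingular curve $E$ with $\grj(E)=19$ has \emph{two} elements. Since $19\not\equiv 0,1728\pmod{23}$, we have $\End(E)^\times=\{\pm 1\}$, so no pair $\{[\phi],[\bar\phi]\}$ is a singleton, contradicting your claim that pairs collapse when $p$ ramifies; and your claimed bijection would force $m(\calO,\End(E),\End(E)^\times)=2\,|r_\grP^{-1}(E)|=4$, which is impossible since Eichler's formula makes the total over all three supersingular curves equal to $h(\calO)=3$ (the correct values are $2$, $1$, $0$ over $\grj=19,\,1728,\,0$). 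What actually happens when $p$ ramifies is that \emph{both} classes $[\phi]$ and $[\bar\phi]$ in a pair occur as reductions of (generally non-isomorphic) CM curves, so your fiber-to-pairs map is $2$-to-$1$ rather than bijective; when $p$ is inert only one class per pair occurs. The invariant governing this is the $\calO$-action on the tangent space of the reduction: it is constant on $\calR^\times$-classes and on the whole fiber, and it separates $[\phi]$ from $[\bar\phi]$ exactly when $p$ is inert (the two characters $\calO\to\Fpbar$ differ by the Frobenius of $\FF_{p^2}$), not when $p$ ramifies. Relatedly, the injectivity you attribute to ``uniqueness built into Deuring's theorem'' is not part of the classical statement, which is pure existence; what is true is uniqueness of the CM lift of the pair $(E,\phi)$, giving injectivity of the fiber into \emph{classes}, not into unordered pairs. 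A correct self-contained argument can be assembled from that injectivity, Deuring surjectivity onto pairs, your local computation (no singleton pairs in the inert case), and the trace formula quoted above; note also that the paper itself does not reprove the lemma but cites \cite[Lemma~3.3]{Elkies-Ono-Yang}, observing that the proof there extends to orders maximal at $p$.
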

A priori, \cite[Lemma~3.3]{Elkies-Ono-Yang} is only stated for the maximal
order $\calO_K$. Nevertheless, the same proof there applies more
generally to quadratic orders maximal at $p$. Alternatively,
using Proposition~\ref{prop:diagram} and the Deuring lifting
theorem\footnote{Here the Deuring lifting theorem guarantees that the
  optimal embedding $\iota: \calO\to \calR_0$ is ``non-special'', that is,
  every optimal embedding $\varphi: \calO\to \calR$ 
  is realizable as $\End(E)\to \End(\wtE)$ for some
  $E\in \ells(\calO)$.} \cite[Theorem~14, \S13.5]{lang-ell-func}
   \cite[Proposition~2.7]{GrossZagier}, 
one easily sees that Lemma~\ref{lem:elkies} is equivalent to 
 to the following
purely arithmetic result, whose independent proof
will be left for the interested reader. 
\begin{lemma}
Keep $\calO$ and $\varepsilon$  as in Lemma~\ref{lem:elkies}. 
Let $\calR$ be a maximal order in $B$, and $\varphi:
   \calO\hookrightarrow \calR$ be an 
 optimal embedding. Denote the
   induced map $\Pic(\calO)\to \Cl(\calR)$  by
   $\varphi$ as well. Then for each $[I]\in \Cl(\calR)$, we have
   \[|\varphi^{-1}([I])|=\varepsilon\cdot m(\calO, \calR_r(I),
     \calR_r(I)^\times). \]
\end{lemma}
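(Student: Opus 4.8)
The plan is to compare the fiber $\varphi^{-1}([I])$ with the set of optimal embeddings of $\calO$ into the right order $\calR_r(I)$ up to conjugacy, and then to reduce the resulting count to a local computation at $p$. Identify $K$ with $\varphi(K)\subseteq B$ and write $\calR\varphi(\gra)$ for the left $\calR$-ideal generated by $\varphi(\gra)$. Since $\gra$ is an invertible $\calO$-ideal it is locally principal, so the right order of $\calR\varphi(\gra)$ is $\varphi(\gra)^{-1}\calR\varphi(\gra)$; because $\varphi(K)$ is commutative, this order still contains $\varphi(\calO)$ optimally (one checks optimality prime by prime). Hence, given $[\gra]\in\varphi^{-1}([I])$, membership in the fiber gives $u\in B^\times$ with $\calR\varphi(\gra)=Iu$, and conjugating $\varphi$ by $u$ produces an optimal embedding $\psi_\gra\colon\calO\hookrightarrow\calR_r(I)$. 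Two admissible choices of $u$ differ by left multiplication by an element of $\calR_r(I)^\times$, so the $\calR_r(I)^\times$-conjugacy class of $\psi_\gra$ is well defined, and we obtain a map $\Phi\colon\varphi^{-1}([I])\to\calR_r(I)^\times\backslash\Emb(\calO,\calR_r(I))$.

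First I would establish the elementary lemma that $\calR\varphi(\mathfrak b)=\calR\varphi(\mathfrak c)$ forces $\mathfrak b=\mathfrak c$: localizing and using optimality at every prime (so that $\varphi^{-1}(\calR_q^\times)\cap K_q^\times=\calO_q^\times$) reduces the claim to the equivalence $\calR_q\varphi(\beta_q)=\calR_q\varphi(\gamma_q)\iff\beta_q\gamma_q^{-1}\in\calO_q^\times$. This yields injectivity of $\Phi$: if $\Phi([\gra])=\Phi([\gra'])$, then since the centralizer of $\varphi(K)$ in $B$ is $\varphi(K)$ itself, unwinding the definitions gives $\calR\varphi(\gra')=\calR\varphi(\gra)\varphi(c)=\calR\varphi(\gra c)$ for some $c\in K^\times$, whence $[\gra']=[\gra]$ in $\Pic(\calO)$.

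It then remains to compute the cardinality of the image of $\Phi$, which by injectivity equals $|\varphi^{-1}([I])|$. The tool is Skolem--Noether: every optimal embedding of $\calO$ into $\calR_r(I)$ is $B^\times$-conjugate to $\varphi$, so both $m(\calO,\calR_r(I),\calR_r(I)^\times)$ and the size of the image of $\Phi$ become double-coset counts over a single global conjugacy class, differing only in the local group imposed at each prime. Passing to completions, the contribution at every $q\neq p$ is trivial: there $B_q\cong M_2(\QQ_q)$ is split, $\calR_q$ is maximal, and the optimal embedding number of any quadratic order into a maximal order in the split case is $1$. At $p$ the contribution is the local optimal embedding number $m_p$ of $\calO_p=\calO_{K_p}$ into the maximal order of the division algebra $B_p$; by the local theory of embeddings (e.g.~\cite[Ch.~30]{voight}), $m_p=2$ when $K_p/\QQ_p$ is the unramified field (i.e.~$p$ inert) and $m_p=1$ when it is ramified, that is $m_p=1/\varepsilon$. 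Combining these gives $|\varphi^{-1}([I])|=m(\calO,\calR_r(I),\calR_r(I)^\times)/m_p=\varepsilon\cdot m(\calO,\calR_r(I),\calR_r(I)^\times)$.

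The hard part is the last step: establishing the factor $\varepsilon$ \emph{fiber by fiber} rather than merely in the aggregate. Summed over $[I]\in\Cl(\calR)$ this identity is Eichler's optimal embedding theorem, but because $B_{p,\infty}$ is definite strong approximation is unavailable, so one cannot globalize local data freely and must argue that the local discrepancy $m_p$ distributes uniformly across the individual fibers. Concretely, the factor $\varepsilon$ records the action of complex conjugation on embedding classes: when $p$ is inert the two local optimal embeddings at $p$ yield classes that are globally $B^\times$-conjugate yet distinct modulo $\calR_r(I)^\times$, so exactly half of the conjugacy classes lie in the image of $\Phi$, whereas when $p$ is ramified there is a single local class and $\Phi$ is a bijection. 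Pinning this local-to-global bookkeeping down is where the real work lies, and it is precisely the arithmetic counterpart of the factor $\varepsilon$ in Lemma~\ref{lem:elkies}.
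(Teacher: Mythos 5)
Your construction of the comparison map $\Phi\colon \varphi^{-1}([I])\to \calR_r(I)^\times\backslash\Emb(\calO,\calR_r(I))$ is sound: the right order of $\calR\varphi(\gra)$ does contain $\varphi(\calO)$ optimally, the class of $\psi_\gra$ is well defined, and your injectivity argument (via the elementary lemma $\calR\varphi(\mathfrak b)=\calR\varphi(\mathfrak c)\Rightarrow \mathfrak b=\mathfrak c$ and the centralizer argument) is correct. Note that the paper itself never proves this lemma directly -- it only observes the equivalence with Lemma~\ref{lem:elkies} via Proposition~\ref{prop:diagram} and Deuring lifting, leaving the independent proof to the reader -- so your direct arithmetic route is exactly what is being asked for. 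The problem is that your final step, the only genuinely hard one, is asserted rather than proved. The identity $|\varphi^{-1}([I])|=m(\calO,\calR_r(I),\calR_r(I)^\times)/m_p$ is precisely the content of the lemma once injectivity is known; your justification (``exactly half of the conjugacy classes lie in the image of $\Phi$'' when $p$ is inert, ``$\Phi$ is a bijection'' when $p$ is ramified) restates that content rather than establishing it, and you say so yourself (``pinning this local-to-global bookkeeping down is where the real work lies''). Dividing a global class count by a local embedding number is not licensed by anything you cite: Eichler's theorem yields it only after summing over $[I]\in\Cl(\calR)$, and, as you correctly note, the absence of strong approximation for the definite algebra $B$ means the fiberwise refinement does not follow formally. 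So the proposal has a genuine gap at its crux.

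The gap is fillable along the lines you gesture at, but the missing content is real. Write $I=\wcR\hat\beta\cap B$ and parametrize embeddings by $\psi=g\varphi g^{-1}$, $g\in B^\times$; then $\psi$ lands optimally in $\calR_r(I)$ iff $\hat\beta_q g$ lies in the local admissible set $A_q$ for every $q$, where $A_q=\calR_q^\times K_q^\times$ for $q\neq p$ (this is where $m_q=1$ enters) and $A_p=B_p^\times$, which for $p$ inert splits as $\calR_p^\times K_p^\times\sqcup\calR_p^\times\pi K_p^\times$ with $\pi$ a uniformizer of $B_p$. Unwinding $\Phi$, its image consists exactly of the classes with $\hat\beta g\in\wcR^\times\whK^\times$; surjectivity of $\Phi$ onto this set is a short computation you also did not record. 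Two further facts then finish the proof: (i) \emph{disjointness} -- for $v\in\calR_r(I)^\times$ and $c\in K^\times$ the parity of the valuation $w_p(\hat\beta gc)$ is unchanged under $g\mapsto vgc$, since $\nrd(v)\in\ZZ_p^\times$ and $w_p(K_p^\times)\subseteq 2\ZZ$ when $p$ is inert, so no class of even type is identified with one of odd type; and (ii) \emph{equinumerosity} -- precomposition with complex conjugation, $\psi\mapsto\psi\circ\sigma$, is a well-defined involution on $\calR_r(I)^\times\backslash\Emb(\calO,\calR_r(I))$ that fixes the unique local class at each $q\neq p$ and swaps the two local classes at $p$ (conjugation by $\pi$ realizes $\sigma$ on $K_p$), hence bijects the image of $\Phi$ with its complement. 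Together (i) and (ii) give $|\varphi^{-1}([I])|=\varepsilon\cdot m(\calO,\calR_r(I),\calR_r(I)^\times)$ in both cases. This bookkeeping is the arithmetic heart of the lemma, and without it your write-up is an incomplete proof.
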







We immediately obtain the following corollaries from
Lemma~\ref{lem:elkies}. 

\begin{corollary}\label{optimal}
Suppose that $\calO$ is maximal at $p$. 
  The $\grj$-invariant of a supersingluar elliptic curve $E/\Fpbar$ is
  a root of $\wtH_\mathcal{O}(x)$ if and only if $\mathcal{O}$ can be
  optimally embedded into $\End(E)$.
\end{corollary}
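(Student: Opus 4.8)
The plan is to identify the roots of $\wtH_\calO(x)$ with the image of the reduction map $r_\grP$ and then read off the nonemptiness of its fibers from Lemma~\ref{lem:elkies}. First I would note that $H_\calO(x)=\prod_{E\in \ells(\calO)}(x-\grj(E))$ is monic with integer coefficients, so reducing modulo $\grP$ gives $\wtH_\calO(x)=\prod_{E\in \ells(\calO)}\bigl(x-\grj(\wtE)\bigr)$, since reduction modulo $\grP$ commutes with formation of the $\grj$-invariant. Thus the set of roots of $\wtH_\calO(x)$ is exactly $\{\grj(\wtE): E\in \ells(\calO)\}$, which, under the canonical identification of $\els$ with the supersingular $\grj$-invariants, is precisely the image of $r_\grP\colon \ells(\calO)\to \els$. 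Consequently, for a supersingular curve $E/\Fpbar$ the $\grj$-invariant $\grj(E)$ is a root of $\wtH_\calO(x)$ if and only if $E$ lies in the image of $r_\grP$, i.e.\ if and only if the fiber $r_\grP^{-1}(E)$ is nonempty.

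The second step is to unwind Lemma~\ref{lem:elkies}. Since $E$ is supersingular, $\calR:=\End(E)$ is a maximal order of $B$, and the lemma (applicable because $\calO$ is maximal at $p$) gives $|r_\grP^{-1}(E)|=\varepsilon\cdot m(\calO,\calR,\calR^\times)$ with $\varepsilon\in\{1/2,1\}$; in particular $\varepsilon\neq 0$. Therefore $r_\grP^{-1}(E)\neq \emptyset$ if and only if $m(\calO,\calR,\calR^\times)=|\calR^\times\bsh \Emb(\calO,\calR)|>0$, and the latter holds exactly when $\Emb(\calO,\calR)\neq \emptyset$, i.e.\ when $\calO$ admits an optimal embedding into $\End(E)$. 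Combining the two steps gives the asserted equivalence.

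The argument is short once Lemma~\ref{lem:elkies} is in hand, so the only point requiring care is the first step: one must make sure that the \emph{set} of roots of $\wtH_\calO(x)$---as opposed to the multiset recording multiplicities---coincides with the image of $r_\grP$, and that every supersingular root indeed arises as the reduction of some member of $\ells(\calO)$. Both follow formally from the product formula for $H_\calO(x)$ and the compatibility of reduction with $\grj$-invariants, so I do not expect a genuine obstacle here; the substantive content has already been isolated in Lemma~\ref{lem:elkies}.
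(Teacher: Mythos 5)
Your proposal is correct and follows the same route as the paper: the paper derives this corollary immediately from Lemma~\ref{lem:elkies}, exactly as you do, by noting that the fiber $r_\grP^{-1}(E)$ is nonempty if and only if $m(\calO,\End(E),\End(E)^\times)>0$, i.e.\ if and only if $\Emb(\calO,\End(E))\neq\emptyset$. Your first step (identifying the root set of $\wtH_\calO(x)$ with the image of $r_\grP$ via reduction of the product formula) is the same implicit identification the paper uses, just spelled out more carefully.
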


This matches well with Corollary~\ref{cor:End-Ea}. Indeed, a classical
result of Chevalley, Hasse and Noether 
\cite[\S4]{Ibukiyama-max-order-1982} says that  any maximal order of $B$ that
contains a copy of $\calO$ optimally is isomorphic to $\gra^{-1}\calR_0\gra$ for
some $[\gra]\in \Pic(\calO)$.

\begin{corollary}\label{cor:no-repeat-roots}
If $p>|\disc(\calO)|$, then the reduction map $r_\grP:\ells(\calO)\to
\els$ is injective. In particular, $\wtH_\calO(x)$ has no repeated 
roots.  
\end{corollary}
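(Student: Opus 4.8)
The plan is to prove injectivity fiberwise by feeding Lemma~\ref{lem:elkies} into an upper bound on optimal embedding numbers. Since $r_\grP$ is a map of finite sets, it is injective if and only if every fiber $r_\grP^{-1}(E)$ has at most one element. By Lemma~\ref{lem:elkies}, $|r_\grP^{-1}(E)|=\varepsilon\cdot m(\calO,\calR,\calR^\times)$ with $\calR=\End(E)$, so it suffices to show $\varepsilon\cdot m(\calO,\calR,\calR^\times)\le 1$ for every supersingular $E$; equivalently $m(\calO,\calR,\calR^\times)\le 1$ when $p$ is ramified and $m(\calO,\calR,\calR^\times)\le 2$ when $p$ is inert. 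Thus the whole statement reduces to an upper bound on the number of $\calR^\times$-conjugacy classes of optimal embeddings of $\calO$ into a maximal order $\calR$ of $B$; by Corollary~\ref{optimal} such embeddings exist exactly when the fiber is nonempty.

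To make this concrete I would encode optimal embeddings by trace-zero elements. Writing $\calO=\ZZ[\omega]$ and $\theta:=2\omega-\trd(\omega)$, one has $\trd(\theta)=0$, $\theta^2=\disc(\calO)$ and $\nrd(\theta)=|\disc(\calO)|=:D$; an optimal embedding $\varphi\colon\calO\hookrightarrow\calR$ is then recorded by $\sigma:=\varphi(\theta)\in\calR$, a trace-zero element with $\sigma^2=\disc(\calO)$, and the conjugate embedding $\overline\varphi$ corresponds to $-\sigma$. Suppose some fiber has $\ge 2$ elements. Then $\varepsilon\cdot m\ge 2$, so $m\ge 2$, and a short case analysis produces two optimal embeddings $\varphi_1,\varphi_2$ whose trace-zero generators $\sigma_1,\sigma_2$ are $\QQ$-linearly independent: when $p$ is inert the fiber being $\ge 2$ forces $m\ge 4$, hence an embedding other than $\varphi$ and $\overline\varphi$; when $p$ is ramified one uses that $\overline\varphi$ is $\calR^\times$-conjugate to $\varphi$, so $m\ge 2$ already forces $\sigma_2\neq\pm\sigma_1$.

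The key computation is then the following. Given $\QQ$-linearly independent trace-zero $\sigma_1,\sigma_2\in\calR$ with $\sigma_1^2=\sigma_2^2=\disc(\calO)$, the commutator $\gamma:=\sigma_1\sigma_2-\sigma_2\sigma_1\in\calR$ is nonzero, trace-zero, anticommutes with $\sigma_1$, and together with $\sigma_1$ generates $B$; writing $s:=\tfrac12\trd(\sigma_1\overline{\sigma_2})$ one finds $\nrd(\gamma)=4\bigl(D^2-s^2\bigr)$ and $B\cong\left(\frac{\disc(\calO),\,-\nrd(\gamma)}{\QQ}\right)$. Since $B=B_{p,\infty}$ is ramified at $p$, while $p>D$ is odd and coprime to $\disc(\calO)$, evaluating the Hilbert symbol at $p$ forces $p$ to divide $\nrd(\gamma)$, i.e. $p\mid\bigl(D-s\bigr)\bigl(D+s\bigr)$, whose two factors lie strictly between $0$ and $2D$ by Cauchy--Schwarz. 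This divisibility is the crux, and matching it to the stated threshold is the main obstacle: a crude reading only rules out $p\ge 2D$ (or, via the trace-zero lattice $\calR\cap\ker\trd$, whose discriminant is a multiple of $p$, only $p\le D^2-s^2$), whereas reaching $p>|\disc(\calO)|$ on the nose requires squeezing out the window $D\le p<2D$. Closing that window is where the optimality of the embeddings must be used together with a finer local analysis at $p$ — tracking the residue of $s$ and the behavior of $p$ in $K$ (for instance that $p$ inert forces $\legendre{\disc(\calO)}{p}=-1$) — to contradict $p>|\disc(\calO)|$ exactly. I expect this constant-sharpening to be the only genuinely delicate point.

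Once $r_\grP$ is injective, separability is immediate and formal. The polynomial $H_\calO(x)$ has $\deg H_\calO=|\ells(\calO)|$ distinct complex roots, namely the $\grj$-invariants of the members of $\ells(\calO)$, and passing to $\wtH_\calO(x)$ preserves the degree. The roots of $\wtH_\calO(x)$ are precisely the reductions of those $\grj$-invariants, that is, the $\grj$-invariants of the curves $r_\grP(E)$ for $E\in\ells(\calO)$; injectivity of $r_\grP$ means these $|\ells(\calO)|$ supersingular $\grj$-invariants in $\Fpbar$ are pairwise distinct, so $\wtH_\calO(x)$ has $\deg\wtH_\calO$ distinct roots and hence no repeated root.
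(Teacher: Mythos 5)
Your opening reduction is exactly the paper's: by Lemma~\ref{lem:elkies} it suffices to bound the number of (conjugacy classes of) optimal embeddings of $\calO$ into each maximal order $\calR\subset B$. Two side remarks before the main point: under the paper's standing assumption that $p$ is non-split in $K$, the hypothesis $p>|\disc(\calO)|$ already rules out ramification (if $p$ ramified then $p\mid \disc(K)\mid \disc(\calO)$), so only the inert case $\varepsilon=\tfrac12$ occurs and your ramified-case branch --- which moreover rests on the unproved assertion that $\overline\varphi$ is $\calR^\times$-conjugate to $\varphi$ --- is unnecessary. The genuine gap sits in the heart of the argument, and you have flagged it yourself. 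What the corollary needs is the sharp implication: two optimal embeddings of $\calO$ into the same maximal order with distinct images force $|\disc(\calO)|\geq p$. Your commutator/Hilbert-symbol estimate delivers only this: such a pair forces $p\mid (D-s)(D+s)$ with $0<D\pm s<2D$ (writing $D=|\disc(\calO)|$), hence $p<2D$. That proves the corollary only under the stronger hypothesis $p\geq 2|\disc(\calO)|$ and leaves the entire window $|\disc(\calO)|<p<2|\disc(\calO)|$ open. This is not a ``constant-sharpening'' detail to be finessed afterwards: the implication in that window is precisely Kaneko's inequality \cite[Theorem~2']{Kaneko}, which is the crux of the paper's proof. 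Kaneko's theorem states that if two orders of the \emph{same} imaginary quadratic field are optimally embedded with distinct images in a maximal order of $B_{p,\infty}$, then the product of their discriminants is at least $p^2$; applied with both orders copies of $\calO$, this gives $D^2\geq p^2$, i.e.\ exactly the contrapositive needed. Your Gram-determinant computation uses neither the optimality of the two embeddings nor the finer structure of $\calR_p$ inside the division algebra $B_p$, and some such input is indispensable to upgrade $p<2D$ to $p\leq D$; gesturing at ``a finer local analysis'' without carrying it out leaves the proof incomplete exactly where the difficulty lies.

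Everything surrounding the gap is sound: extracting $\QQ$-linearly independent $\sigma_1,\sigma_2$ from $m\geq 4$ in the inert case is correct (same image forces $\varphi'\in\{\varphi,\overline\varphi\}$ since $\Aut(\calO)$ consists of the identity and complex conjugation), the identity $\nrd(\gamma)=4(D^2-s^2)$ and the ramification argument at $p$ are correct (one should verify $s\in\ZZ$, which does hold because $\trd(\sigma_1\sigma_2)=4\trd\bigl(\varphi_1(\omega)\varphi_2(\omega)\bigr)-2\trd(\omega)^2$ is even), and the deduction of separability of $\wtH_\calO(x)$ from injectivity of $r_\grP$ is fine. But as written, the proposal proves a strictly weaker statement than Corollary~\ref{cor:no-repeat-roots}; to complete it you must either invoke \cite[Theorem~2']{Kaneko}, as the paper does, or supply an independent proof of that inequality.
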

We give  a simple proof that is independent of the result of Gross and
Zagier \cite{GrossZagier}. 
\begin{proof}
  Since $p$ does not split in $K$ and is strictly greater than
  $|\disc(\calO)|$, it is necessarily inert in $K$.   From Lemma~\ref{lem:elkies}, it suffices to show that $|\Emb(\calO,
  \calR)|\leq 2$ for any maximal order $\calR$ in $B$.  Since
  $p>|\disc(\calO)|$, Kaneko's inequality  \cite[Theorem~2']{Kaneko}
  forces any  two optimal embeddings   $\varphi, \varphi': \calO\to
  \calR$ to have
  the same image. On the other hand, $\varphi$ and $\varphi'$ share the same
  image if and only if $\varphi'=\varphi$ or $\bar\varphi$, the
  complex conjugate of $\varphi$. The desired inequality $|\Emb(\calO,
  \calR)|\leq 2$ follows immediately. 
\end{proof}

\begin{remark}
  In another direction,  Elkies, Ono and Yang
  \cite[Theorem~1.4]{Elkies-Ono-Yang} showed that there exists a bound
  $N_p$ such that the reduction map $r_\grP:\ells(\calO_K)\to
\els$ is surjective whenever $|\disc(\calO_K)|>N_p$. This bound is
first effectivized  by Kane \cite{Kane-2009} conditionally  upon
the generalized Riemann hypothesis. Liu et al.~further improved this
bound in 
~\cite[Corollary~1.3]{Liu-Masri-Young}. 
\end{remark}

Let us return to the task of interpreting $\Fp$-roots of
$\wtH_\calO(x)\in \Fp[x]$ in terms of maximal orders in $B$. For the rest of this section, we keep the
additional assumption 
that $p>|\disc(\calO)|$. We recall from \cite[Proposition 2.4]{DG16} a classical result on supersingular elliptic curves in characteristic $p$.

\begin{lemma}\label{Fpsuperj}
Let $p>3$ and let $E$ be a supersingular elliptic curve over $\overline{\mathbb{F}}_p$. Then $\grj(E)\in \Fp$ if and only if there exists $\psi\in \End(E)$ such that $\psi^2 = -p$.
\end{lemma}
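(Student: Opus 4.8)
The plan is to prove the two implications separately, in both cases relating the endomorphism $\psi$ with $\psi^2=-p$ to the Frobenius isogeny. Throughout I would use that a supersingular $\grj$-invariant lies in $\Fpp$, so that the claim $\grj(E)\in\Fp$ is equivalent to $\grj(E)^p=\grj(E)$, and that $\End(E)$ is a maximal order in a quaternion algebra in which the degree of an isogeny equals its reduced norm.

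First I would prove the forward implication. If $\grj(E)\in\Fp$, then (using $p>3$) $E$ is isomorphic over $\Fpbar$ to an elliptic curve defined over $\Fp$; since the endomorphism ring is an isomorphism invariant over $\Fpbar$ and supersingularity depends only on $\grj(E)$, I may assume $E$ itself is defined over $\Fp$. The $p$-power Frobenius is then a genuine endomorphism $\pi\in\End(E)$ satisfying $\pi^2-a_p\pi+p=0$, where $a_p=p+1-\#E(\Fp)$. Supersingularity forces $p\mid a_p$, while the Hasse bound gives $|a_p|\le 2\sqrt{p}<p$ for $p\ge 5$; hence $a_p=0$ and $\pi^2=-p$. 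Taking $\psi=\pi$ settles this direction.

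For the converse, suppose $\psi\in\End(E)$ satisfies $\psi^2=-p$. Then $\psi$ is not rational, so its reduced characteristic polynomial is $x^2+p$, giving $\trd(\psi)=0$ and $\nrd(\psi)=p$; thus $\deg\psi=\nrd(\psi)=p$. I claim $\psi$ is inseparable: a separable isogeny of degree $p$ would have an étale kernel, i.e.\ a nontrivial subgroup of $E[p](\Fpbar)$, but this group is trivial precisely because $E$ is supersingular. Being inseparable of prime degree $p$, the isogeny $\psi$ has inseparable degree exactly $p$, so it factors as $\psi=\lambda\circ\pi$, where $\pi:E\to E^{(p)}$ is the $p$-power Frobenius and $\lambda:E^{(p)}\to E$ is separable of degree $1$, hence an isomorphism. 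Therefore $E\cong E^{(p)}$, whence $\grj(E)=\grj(E^{(p)})=\grj(E)^p$ and $\grj(E)\in\Fp$.

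The main obstacle is the converse, and specifically the verification that $\psi$ must be inseparable and hence factor through Frobenius; this is where supersingularity enters essentially, through the vanishing of $E[p](\Fpbar)$. The forward direction is comparatively routine once one knows the trace of Frobenius vanishes, which follows from the Hasse bound for $p\ge5$. Two small technical points I would state explicitly are the reduction to a model over $\Fp$ in the forward direction and the identity $\grj(E^{(p)})=\grj(E)^p$ in the converse; both are standard but deserve to be recorded.
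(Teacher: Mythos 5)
Your proof is correct. The paper gives no proof of this lemma itself---it simply cites it as a classical result (\cite[Proposition 2.4]{DG16})---and your argument is essentially the standard one found there: in the forward direction, passing to a model over $\Fp$ and using supersingularity plus the Hasse bound $|a_p|\le 2\sqrt{p}<p$ to force the Frobenius trace to vanish; in the converse, using $\nrd(\psi)=p$ together with the vanishing of $E[p](\Fpbar)$ to show $\psi$ is inseparable, hence factors as an isomorphism $E^{(p)}\to E$ composed with Frobenius, giving $\grj(E)^p=\grj(E)$.
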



Recall that $\calH_p$ denotes the set of   $\Fp$-roots of
$\wtH_\calO(x)$, which can be identified canonically     with a subset of $\els$.

\begin{lemma}\label{lem:bijTp}
 The map $\rho: \els\to
\typ(B)$ in (\ref{eq:6}) induces a bijection between $\calH_p$ and the
following subset $\calT_p\subseteq \typ(B)$:
\begin{equation}
  \label{eq:10}
  \calT_p:=\{[\calR]\in \typ(B)\mid \Emb(\calO, \calR)\neq \emptyset,
  \text{ and }\exists \alpha\in \calR \text{ such that } \alpha^2=-p\}.
\end{equation}
\end{lemma}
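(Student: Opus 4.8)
The plan is to combine Lemma~\ref{Fpsuperj} with Corollary~\ref{optimal} and the surjectivity of $\rho$ to show that the condition defining $\calT_p$ is precisely the image under $\rho$ of the curves whose $\grj$-invariants lie in $\calH_p$, and that $\rho$ restricted to $\calH_p$ is injective.

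\textbf{Forward direction and well-definedness.} First I would note that $\calH_p$, viewed as a subset of $\els$, consists of those supersingular $E$ (up to isomorphism) with $\grj(E)\in\Fp$ that are roots of $\wtH_\calO(x)$. By Corollary~\ref{optimal}, $\grj(E)$ is a root of $\wtH_\calO(x)$ if and only if $\calO$ embeds optimally into $\End(E)$, i.e.~$\Emb(\calO,\End(E))\neq\emptyset$; and by Lemma~\ref{Fpsuperj} (applicable since $p>|\disc(\calO)|\geq 3$ forces $p>3$), $\grj(E)\in\Fp$ if and only if there is $\psi\in\End(E)$ with $\psi^2=-p$. Hence $E\in\calH_p$ exactly when $\calR=\End(E)$ satisfies both conditions defining $\calT_p$. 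Since both ``$\calO$ embeds optimally into $\calR$'' and ``$\exists\alpha\in\calR$ with $\alpha^2=-p$'' depend only on the isomorphism class $[\calR]\in\typ(B)$ (conjugation by a unit of $B$ preserves each property), the image $\rho(E)=[\End(E)]$ lands in $\calT_p$, so $\rho(\calH_p)\subseteq\calT_p$. I would then observe that $\rho(\els\setminus\calH_p)$ is disjoint from $\calT_p$: if $E\notin\calH_p$ then either $\calO$ does not embed optimally into $\End(E)$ or no $\alpha$ with $\alpha^2=-p$ exists, and again these are class-invariant, so $\rho(E)\notin\calT_p$. This shows $\rho^{-1}(\calT_p)=\calH_p$ and gives surjectivity of $\rho|_{\calH_p}:\calH_p\to\calT_p$ from the surjectivity of $\rho$ in~\eqref{eq:6}.

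\textbf{Injectivity.} For the bijection I would use Corollary~\ref{cor:no-repeat-roots} together with the fiber-count of Lemma~\ref{lem:elkies}. Because $p>|\disc(\calO)|$ and $p$ is inert, the reduction map $r_\grP$ is injective with $\varepsilon=1/2$, so each nonempty fiber $r_\grP^{-1}(E)$ has cardinality $\tfrac12\,m(\calO,\calR,\calR^\times)=1$, forcing $m(\calO,\calR,\calR^\times)=2$ whenever $\calO$ embeds into $\calR=\End(E)$. Now if $E,E'\in\calH_p$ have $\rho(E)=\rho(E')$, then $\End(E)\simeq\End(E')$ as orders in $B$; I must rule out $E\not\simeq E'$. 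The two supersingular curves in a common type, if distinct, are related by the nontrivial Galois/Frobenius conjugation (the $p$-power Frobenius exchanges an $\Fpp$-curve with its conjugate), and for an $E$ with $\grj(E)\in\Fp$ this conjugate is $E$ itself up to isomorphism; more precisely, since $m(\calO,\calR,\calR^\times)=2$ and the two conjugacy classes of optimal embeddings $\varphi,\bar\varphi$ both lift to curves in $\calH_p$, the injectivity of $r_\grP$ identifies them. So each type in $\calT_p$ has exactly one preimage in $\calH_p$, giving injectivity.

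\textbf{Expected main obstacle.} The cleanest point to justify carefully is the injectivity of $\rho|_{\calH_p}$: a priori a single type $[\calR]\in\typ(B)$ can correspond to two non-isomorphic supersingular curves (the Galois-conjugate pair over $\Fpp$), and I need that at most one of these lies in $\calH_p$, or that they coincide when $\grj\in\Fp$. The comfortable way to package this is via the fiber count: $m(\calO,\calR,\calR^\times)=2$ accounts for the embedding $\varphi$ and its conjugate $\bar\varphi$, and the injectivity of $r_\grP$ from Corollary~\ref{cor:no-repeat-roots} shows these two embeddings arise from a single curve in $\ells(\calO)$, whose reduction is a single curve in $\els$ with $\grj\in\Fp$. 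Thus the two optimal embeddings into the one type $\calR$ correspond to one point of $\calH_p$, and the map $\rho|_{\calH_p}$ is a bijection onto $\calT_p$.
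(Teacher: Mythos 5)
Your reduction of the set-equality $\rho(\calH_p)=\calT_p$ to Corollary~\ref{optimal} plus Lemma~\ref{Fpsuperj} (and surjectivity of $\rho$, using that both defining conditions of $\calT_p$ are invariant under conjugation) is exactly the paper's argument and is fine, as is your observation that $p>|\disc(\calO)|\geq 3$ makes Lemma~\ref{Fpsuperj} applicable. The problem is injectivity. The fiber-count ``packaging'' you propose does not prove it: Lemma~\ref{lem:elkies} together with Corollary~\ref{cor:no-repeat-roots} only tells you that \emph{each} supersingular curve $E$ whose endomorphism ring optimally contains $\calO$ has exactly one preimage under $r_\grP$. If $E\neq E'$ were two non-isomorphic curves with $\End(E)\simeq\End(E')\simeq\calR$ and $[\calR]\in\calT_p$, then both would lie in $\calH_p$ (again by isomorphism-invariance of the conditions), and each would have its own single preimage in $\ells(\calO)$; this scenario is perfectly consistent with the injectivity of $r_\grP$ and with $m(\calO,\calR,\calR^\times)=2$, so no contradiction arises from counting. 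The quantity $m(\calO,\calR,\calR^\times)$ concerns embeddings into one fixed order, and the factor $\varepsilon=1/2$ identifies $\varphi$ with $\bar\varphi$ inside the fiber over a \emph{single} curve; it says nothing about two distinct curves sharing a type.

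What actually rules out that scenario is the fact you state in passing and then abandon: two supersingular curves over $\Fpbar$ have isomorphic endomorphism rings if and only if they are isomorphic or Frobenius conjugates, i.e.\ $E'\simeq E$ or $E'\simeq E^{(p)}$. This is a genuine theorem --- it is \cite[Lemma 42.4.1]{voight}, which is precisely what the paper cites at this point --- and it is not recoverable from the Elkies--Ono--Yang count. Granting it, injectivity is immediate: if $E,E'\in\calH_p$ have the same type, then $E'\simeq E$ or $E'\simeq E^{(p)}$, and since $\grj(E)\in\Fp$ we have $\grj(E^{(p)})=\grj(E)^p=\grj(E)$, so $E^{(p)}\simeq E$ and hence $E'\simeq E$ in either case. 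Your write-up becomes correct (and coincides with the paper's proof) once you cite this conjugation fact as the justification and delete the counting argument, rather than treating the counting as the ``more precise'' version of the conjugation fact.
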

\begin{proof}
  Combining
Corollary~\ref{optimal} and Lemma~\ref{Fpsuperj}, we see that
$\rho(\calH_p)=\calT_p$. Now it follows from
\cite[Lemma 42.4.1]{voight} that $\rho: \calH_p\to \calT_p$ is
injective, and hence bijective.  
\end{proof}

We give another characterization of $\calT_p$ by presenting
the quaternion algebra $B=B_{p, \infty}$ more concretely. Let $d\in \mathbb{N}$ be the unique square-free positive integer such that $K=\QQ(\sqrt{-d})$. The assumption that $p$ is inert in $K$ amounts to the equality $\legendre{-d}{p}=-1$.
Let $\left( \frac{-d,-p}{\QQ} \right)$ be the quaternion $\QQ$-algebra
with standard basis $\{1, i,j,k\}$ such that
\begin{equation}
  \label{eq:2}
  i^2 = -d,\quad  j^2 = -p\quad \text{and}\quad k = ij = -ji.
\end{equation}
 We  identify $K=\QQ(\sqrt{-d})$ with $\QQ(i)$,
 and $\calO$ with the corresponding order in $\QQ(i)$. 
 Put
 $\Lambda:=\calO+j \calO$, which is  an order (of full rank) in the
 above quaternion algebra. 
 Consider the following finite set of maximal orders: 
 \begin{equation}
   \label{eq:8}
S^\opt := \left\{\scrR \subset \left( \frac{-d,-p}{\QQ} \right)
    \middle\vert \,
  \scrR\text{ is a maximal order containing }\Lambda \text{ and } \scrR\cap\QQ(i) = \mathcal{O}\right\}.   
 \end{equation}
Here the superscript ``opt" stands for ``$\mathcal{O}$-optimal".


\begin{proposition}\label{prop:Sopt}
Let $\calR$ be a maximal order in $B$. We have $[\calR]\in \calT_p$ if
and only if 
$\calR\simeq \scrR$ for some $\scrR\in S^\opt$. In particular,
$\calH_p\neq \emptyset$ if and only if $ \left(
  \frac{-d,-p}{\QQ} \right)\simeq B$ and $S^\opt\neq \emptyset$.
\end{proposition}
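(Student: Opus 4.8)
The plan is to prove the stated equivalence $[\calR]\in\calT_p\iff \calR\simeq\scrR$ for some $\scrR\in S^\opt$ directly in both directions; the ``in particular'' assertion then follows formally, since $\calH_p\neq\emptyset\iff\calT_p\neq\emptyset$ by Lemma~\ref{lem:bijTp}, and any ring isomorphism between a maximal order of $B$ and a maximal order $\scrR\in S^\opt\subset\left(\frac{-d,-p}{\QQ}\right)$ extends to an isomorphism $B\simeq\left(\frac{-d,-p}{\QQ}\right)$ of the ambient algebras. The backward direction is straightforward: given $\scrR\in S^\opt$ with $\scrR\simeq\calR$, I transport the data along $B\simeq\left(\frac{-d,-p}{\QQ}\right)$; the inclusion $\calO=\scrR\cap\QQ(i)\subset\scrR$ becomes an optimal embedding $\calO\hookrightarrow\calR$, while $j\in\Lambda\subseteq\scrR$ with $j^2=-p$ supplies the required element, so $[\calR]\in\calT_p$.

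For the forward direction, suppose $[\calR]\in\calT_p$, so I am handed an optimal embedding $\varphi\colon\calO\hookrightarrow\calR$ and an element $\alpha\in\calR$ with $\alpha^2=-p$. Writing $\theta=\sqrt{-d}$, I want an isomorphism $\Phi\colon B\xrightarrow{\sim}\left(\frac{-d,-p}{\QQ}\right)$ with $\Phi(\varphi(\theta))=i$ and $\Phi(\alpha)=j$; for this to respect the defining relations of $\left(\frac{-d,-p}{\QQ}\right)$ the crucial point is that $\alpha$ must anticommute with $\varphi(\theta)$. Granting that, $\Phi$ is forced on the $\QQ$-basis $1,\varphi(\theta),\alpha,\varphi(\theta)\alpha$ and respects all relations, which already shows $\left(\frac{-d,-p}{\QQ}\right)\simeq B$. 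Then $\scrR_0:=\Phi(\calR)$ is a maximal order containing $\Phi(\varphi(\calO))=\calO$ and $\Phi(\alpha)=j$, hence $\scrR_0\supseteq\calO+j\calO=\Lambda$; optimality gives $\scrR_0\cap\QQ(i)=\Phi(\calR\cap\varphi(K))=\calO$, so $\scrR_0\in S^\opt$ and $\scrR_0\simeq\calR$, as desired.

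The main obstacle is therefore the anticommutation claim, and this is exactly where the hypothesis $p>|\disc(\calO)|$ should enter. First, since $\nrd(\alpha)=p$ and $B$ is ramified at $p$, the principal left ideal $\calR\alpha$ has reduced norm $p$; but $B_{p,\infty}$ admits a unique left $\calR$-ideal of reduced norm $p$, namely the two-sided ideal $\grP$ with $\grP^2=p\calR$ (locally at $p$ the completion is a division algebra with a unique maximal ideal). Hence $\calR\alpha=\grP=\alpha\calR$, so $\alpha$ normalizes $\calR$, i.e.\ $\alpha\calR\alpha^{-1}=\calR$. Consequently $\varphi'\colon x\mapsto\alpha\varphi(x)\alpha^{-1}$ is again an optimal embedding of $\calO$ into $\calR$. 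Now I invoke Kaneko's inequality~\cite{Kaneko} exactly as in the proof of Corollary~\ref{cor:no-repeat-roots}: because $p>|\disc(\calO)|$, the embeddings $\varphi$ and $\varphi'$ must have the same image, so $\varphi'=\varphi$ or $\varphi'=\bar\varphi$.

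The case $\varphi'=\varphi$ would force $\alpha$ to centralize $\varphi(K)$, hence $\alpha\in\varphi(K)$ since a maximal subfield is its own centralizer, contradicting $\QQ(\alpha)=\QQ(\sqrt{-p})\neq\varphi(K)$ (note $p\nmid d$). Therefore $\varphi'=\bar\varphi$, which means $\alpha\varphi(\theta)\alpha^{-1}=\varphi(\bar\theta)=-\varphi(\theta)$; that is, $\alpha$ anticommutes with $\varphi(\theta)$, completing the argument. I expect the only delicate points to be the uniqueness of the reduced-norm-$p$ ideal and the clean transfer of Kaneko's bound into the rigidity statement $\varphi'(K)=\varphi(K)$, both of which are local/rigidity inputs already present in this setting.
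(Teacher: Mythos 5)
Your proof is correct and follows essentially the same route as the paper: you show $\alpha$ normalizes $\calR$ via the unique two-sided prime of $\calR$ above $p$ (the paper cites Vign\'eras for this), invoke Kaneko's inequality with $p>|\disc(\calO)|$ to force conjugation by $\alpha$ to preserve the image of $\calO$, rule out the identity automorphism (you via $\QQ(\alpha)=\QQ(\sqrt{-p})\neq\varphi(K)$, the paper via the discriminant bound again), and conclude that $\alpha$ acts as complex conjugation, yielding $B\simeq\left(\frac{-d,-p}{\QQ}\right)$ and $\calR$ isomorphic to a member of $S^\opt$. The differences are only in presentation, not in substance.
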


\begin{proof}
  Clearly, if $\calR\simeq \scrR$ for some $\scrR\in S^\opt$, then
  $[\calR]\in \calT_p$. Conversely, suppose that $[\calR]\in \calT_p$,
  that is, $\calR$ contains a copy of $\calO$ optimally, and there
  exists $\alpha\in \calR$ with $\alpha^2=-p$. Then
  $\calR\alpha$ is the unique two sided prime ideal of $\calR$ lying
  above $p$.  From  \cite[Exercise~I.4.6]{vigneras}, 
 $\calR$ is
  normalized by $\alpha$, which implies that 
  $\calO_\alpha:=\alpha\mathcal{O}\alpha^{-1}$ is still a quadratic order
  optimally embedded in $\mathcal{R}$.  If $\calO_\alpha\neq \calO$, then
  $|\disc(\calO)|\geq p$ by Kaneko's inequality \cite[Theorem
  2']{Kaneko}, contradicting to our assumption that
  $|\disc(\calO)|<p$. Thus $\calO_\alpha=\calO$, and conjugation by $\alpha$
  induces an automorphism $\sigma\in \Aut(\calO)$.  If $\sigma$ is the
  identity, then $\alpha$ lies in the centralizer of
  $\calO$ in $B$, which is just $K$. This  contradicts to the
  assumption 
  $|\disc(\calO)|<p$ again.  It follows that $\sigma$ is the unique
  nontrivial automorphism of $\calO$, i.e.~the complex conjugation. 
 We conclude that 
  $\Lambda_\calR:=\calO+\alpha \calO\subset \calR$ is isomorphic to $\Lambda$, and
 $B=\Lambda_\calR\otimes_\ZZ \QQ\simeq \Lambda\otimes_\ZZ\QQ= \left(
  \frac{-d,-p}{\QQ} \right)$. Consequently, $\calR$ is 
  isomorphic to some member of $S^\opt$. The last statement follows
  from the bijection $\calH_p\simeq \calT_p$  in  Lemma~\ref{lem:bijTp}.
\end{proof}

\begin{lemma}\label{lem:isom-B-p-infty}
The isomorphism $\left( \frac{-d,-p}{\QQ} \right)\simeq B$ holds if and only if $\legendre{-p}{\ell}=1$ for every odd prime factor $\ell$ of $d$.
\end{lemma}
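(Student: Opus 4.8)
The plan is to use the classification of quaternion $\QQ$-algebras by their ramification sets together with Hilbert reciprocity. Write $A:=\left(\frac{-d,-p}{\QQ}\right)$. A quaternion algebra over $\QQ$ is determined up to isomorphism by the finite, even-cardinality set of places at which it ramifies, and $B=B_{p,\infty}$ is by definition ramified exactly at $\{p,\infty\}$. Hence $A\simeq B$ if and only if $A$ ramifies precisely at $p$ and $\infty$. Since ramification at a place $v$ is detected by the local Hilbert symbol $(-d,-p)_v=-1$, the whole argument reduces to computing these symbols place by place; note that both entries $-d$ and $-p$ are negative, and $p$ is odd because $p>3$.

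First I would dispose of the places that contribute no condition. At $v=\infty$ both entries are negative, so $(-d,-p)_\infty=-1$ and $A$ ramifies at $\infty$. At $v=p$ the entry $-d$ is a $p$-adic unit, because $p$ inert in $K$ forces $p\nmid d$; the standard formula for the symbol at an odd prime then gives $(-d,-p)_p=\legendre{-d}{p}=-1$, the last equality expressing that $p$ is inert in $K=\QQ(\sqrt{-d})$. Thus $A$ ramifies at $p$ as well. At an odd prime $\ell\nmid dp$ both entries are units, so $(-d,-p)_\ell=1$ and there is no ramification. The only odd primes that can matter are therefore the $\ell\mid d$: as $d$ is squarefree and $\ell\neq p$, here $-d$ has valuation $1$ while $-p$ is a unit, and the same formula yields $(-d,-p)_\ell=\legendre{-p}{\ell}$. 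So $A$ is unramified at such an $\ell$ exactly when $\legendre{-p}{\ell}=1$.

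The remaining place $v=2$ is where a direct symbol computation is most delicate, and this is the main obstacle; I would sidestep it entirely. By Hilbert reciprocity $\prod_v(-d,-p)_v=1$, and since every factor is $\pm1$ the symbol at $2$ is forced to equal the product of all the others, namely $(-d,-p)_2=\prod_{\ell\mid d,\,\ell\text{ odd}}\legendre{-p}{\ell}$ (the two $-1$'s from $\infty$ and $p$ cancelling). Assembling the pieces: $A$ always ramifies at $\infty$ and $p$, is unramified outside $\{2,p\}\cup\{\ell\mid d\}$, and its behaviour at the remaining places is governed by the symbols $\legendre{-p}{\ell}$. If $\legendre{-p}{\ell}=1$ for every odd $\ell\mid d$, then $A$ is unramified at each such $\ell$ and, by the reciprocity identity, also at $2$, so its ramification set is exactly $\{p,\infty\}$ and $A\simeq B$. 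Conversely, $A\simeq B$ forces $A$ to be unramified at each odd $\ell\mid d$, i.e.\ $\legendre{-p}{\ell}=1$. The pleasant point, which is why no separate condition at $2$ appears in the statement, is that reciprocity turns the constraint at $2$ into an automatic consequence of the odd-prime conditions.
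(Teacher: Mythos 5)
Your proof is correct and follows essentially the same route as the paper: compute the Hilbert symbols $(-d,-p)_v$ at $\infty$, at $p$, and at the odd primes via the standard formula, then invoke Hilbert reciprocity (the product formula) to show the condition at $2$ is automatic. The paper's proof is organized identically, down to using reciprocity to avoid any explicit computation at the place $2$.
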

\begin{proof}
For the moment, let $\ell$ be either a prime number or $\infty$. Write $(-d, -p)_\ell$ for the Hilbert symbol of $-d$ and $-p$ relative to $\QQ_\ell$ (where $\QQ_\infty=\RR$). From \cite[Corollaire~II.1.2]{vigneras}, $\left( \frac{-d,-p}{\QQ} \right)$ is split at $\ell$ if and only if $(-d, -p)_\ell=1$.
Clearly, $(-d, -p)_\infty=-1$. 

Now assume that $\ell$ is an odd prime. By our assumption,  $p$ is an odd prime satisfying $\legendre{-d}{p}=-1$.  From \cite[Theorem~1, \S III.1]{Serre-arithmetic}, we easily compute that 
\[(-d, -p)_\ell=\begin{cases}
1&\text{if } \ell\nmid (dp);\\
-1& \text{if } \ell=p;\\
\legendre{-p}{\ell} &\text{if } \ell|d. 
\end{cases}\]
  Therefore, if $\left( \frac{-d,-p}{\QQ} \right)\simeq B$, then necessarily $\legendre{-p}{\ell}=1$ for every odd prime factor $\ell$ of $d$.

Conversely, if $\legendre{-p}{\ell}=1$ for every odd prime factor $\ell$ of $d$,   then $(-d, -p)_2=1$ by the product formula \cite[Theorem~2, \S III.2]{Serre-arithmetic}. Hence
this condition is also sufficient for the  isomorphism $\left( \frac{-d,-p}{\QQ} \right)\simeq B$. 
\end{proof}

\section{The $\Pic(\mathcal{O})[2]$-action on $\calH_p$ and the
  nonemptiness criterion}\label{sec:free_transitive}

Throughout this section, we assume that $p$ is inert in $K=\QQ(\sqrt{-d})$
and strictly greater than $|\disc(\calO)|$. 
Assume further  that the quaternion $\QQ$-algebra $\left(
  \frac{-d,-p}{\QQ} \right)$ is ramified precisely at $p$ and
infinity,  for otherwise $\calH_p=\emptyset$. Denote $\left(
  \frac{-d,-p}{\QQ} \right)$ simply by $B$
henceforth and let $\{1, i, j, k\}$ be the standard basis of $B$ as in
(\ref{eq:2}). We identify $K$ with the subfield $\QQ(i)$ of $B$. Then
 conjugation by $j$ stabilizes $K$ and sends each $x\in K$ to its
complex conjugate $\bar{x}$. Let $\Lambda=\calO+j\calO$, and $S^\opt$ be
the set of maximal orders in (\ref{eq:8}).



First, we assume that $\calH_p\neq \emptyset$ and  exhibit a regular action of $\Pic(\calO)[2]$ on
$\calH_p$. Since the reduction map
$r_\grP: \ells(\calO)\to \els$ is injective by
Corollary~\ref{cor:no-repeat-roots}, the regular action  of
$\Pic(\calO)$ on
$\ells(\calO)$ induces a regular action of $\Pic(\calO)$ on the image
$r_\grP(\ells(\calO))$ (or equivalently, on the full set of roots of
$\wtH_\calO(x)$).  We show that this action restricts to a regular 
$\Pic(\calO)[2]$-action on $\calH_p$.


\begin{proposition}\label{prop:picO2}
  Let $E_0\in \ells(\calO)$ be a member satisfying $\grj(\wtE_0)\in
  \Fp$. Given $[\gra]\in \Pic(\calO)$, we 
 have $\grj(\widetilde{E_0^\gra})\in
\Fp$   if and only if $[\gra]$ is a 2-torsion. In particular,
$\Pic(\calO)[2]$ acts regularly on $\calH_p$. 
\end{proposition}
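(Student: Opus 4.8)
The plan is to translate the condition $\grj(\widetilde{E_0^\gra})\in\Fp$ into a statement purely about the maximal order $\calR:=\gra^{-1}\calR_0\gra$, and then to recognize that statement as the $2$-torsion condition on $[\gra]$. By Corollary~\ref{cor:End-Ea} we have $\End(\widetilde{E_0^\gra})\simeq\calR$, and by Lemma~\ref{Fpsuperj} the condition $\grj(\widetilde{E_0^\gra})\in\Fp$ is equivalent to the existence of some $\psi\in\calR$ with $\psi^2=-p$. Since $\grj(\wtE_0)\in\Fp$, Proposition~\ref{prop:Sopt} lets us identify $\calR_0:=\End(\wtE_0)$ with a member of $S^\opt$, so that $j\in\calR_0$, $j^2=-p$, and $j$ normalizes $\calR_0$ with $\calR_0 j=j\calR_0$ the unique two-sided prime $P_0$ of $\calR_0$ above $p$. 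The proposition then reduces to the claim: $\calR$ contains an element of square $-p$ if and only if $\overline{\gra}\,\gra^{-1}$ is principal, i.e.\ $[\overline{\gra}]=[\gra]$; since $\gra\overline{\gra}=\nrd(\gra)\calO$ is principal we have $[\overline{\gra}]=[\gra]^{-1}$, so this is exactly $[\gra]^2=1$.

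For the easy direction, suppose $[\gra]^2=1$, so $\overline{\gra}=\mu\gra$ for some $\mu\in K^\times$ with $\mu\bar\mu=1$. Using $j x=\bar x j$ for $x\in K$ one gets $\overline{\gra}\,j=j\gra$, whence $(\gra^{-1}\overline{\gra})j=\gra^{-1}j\gra\subseteq\gra^{-1}\calR_0\gra=\calR$. Because $\gra^{-1}\overline{\gra}=\mu\calO$ is principal, the element $\mu j$ lies in $\calR$, and $(\mu j)^2=\mu\bar\mu\,j^2=-p$, producing the required $\psi=\mu j$.

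For the converse I would argue as in the proof of Proposition~\ref{prop:Sopt}. Given $\psi\in\calR$ with $\psi^2=-p$, one first checks $\calR\cap K=\calO$ (this is preserved under the ideal conjugation $\calR_0\mapsto\gra^{-1}\calR_0\gra$ because $\gra$ is locally principal), so $\calO$ is optimally embedded in $\calR$; Kaneko's inequality \cite[Theorem~2']{Kaneko} together with $p>|\disc(\calO)|$ then forces the optimal embeddings $\calO\hookrightarrow\calR$ and $\psi\calO\psi^{-1}\hookrightarrow\calR$ to share their image, so conjugation by $\psi$ restricts to the nontrivial automorphism of $\calO$, i.e.\ complex conjugation. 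As both $j$ and $\psi$ induce complex conjugation on $K$, the element $j^{-1}\psi$ centralizes $K$ and hence lies in $K$; writing $\psi=aj$ with $a\in K$ and using $\psi^2=-p$ gives $a\bar a=1$, so by Hilbert's Theorem~90 we may write $a=\bar b/b$ and thus $\psi=b^{-1}jb$ for some $b\in K^\times$. Now the unique two-sided prime $P\subset\calR$ above $p$ admits two descriptions: on one hand $P=\calR\psi$ (since $\psi^2=-p$ normalizes $\calR$), and on the other $P=\gra^{-1}P_0\gra=\gra^{-1}\calR_0\overline{\gra}\,j$. Equating these and substituting $\psi=b^{-1}jb$, I would cancel the invertible factors $\gra^{-1}$ on the left and $j$ on the right to obtain an equality of left $\calR_0$-ideals $\calR_0\big(\tfrac{\bar b}{b}\gra\big)=\calR_0\overline{\gra}$; intersecting both sides with $K$ and invoking the identity $(\calR_0\mathfrak{x})\cap K=\mathfrak{x}$ (valid for every invertible $\calO$-ideal $\mathfrak{x}$ because $\calR_0\cap K=\calO$ and $\mathfrak{x}$ is locally principal) yields $\tfrac{\bar b}{b}\gra=\overline{\gra}$. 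Hence $\overline{\gra}\,\gra^{-1}=\tfrac{\bar b}{b}\calO$ is principal and $[\gra]^2=1$.

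Finally I would assemble the regular action. By Corollary~\ref{cor:no-repeat-roots} the reduction map is injective, so the regular $\Pic(\calO)$-action of \eqref{eq:1} transports to a regular action on the full set of roots of $\wtH_\calO$. The equivalence just proved says precisely that the translate $\widetilde{E_0^\gra}$ of the base point $\wtE_0\in\calH_p$ lies in $\calH_p$ exactly when $[\gra]\in\Pic(\calO)[2]$; this shows at once that $\Pic(\calO)[2]$ preserves $\calH_p$, acts freely (being the restriction of a free action), and acts transitively (any two elements of $\calH_p$ differ by a class $[\gra]$, which must then be $2$-torsion). I expect the converse direction to be the main obstacle: the delicate points are the bookkeeping with the two-sided prime $P$ and, above all, the intersection identity $(\calR_0\mathfrak{x})\cap K=\mathfrak{x}$, whose validity rests on $\calO$ being optimally embedded (equivalently maximal at $p$), while the Kaneko-inequality step is what rigidifies $\psi$ into the form $b^{-1}jb$ in the first place.
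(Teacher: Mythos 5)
Your proof is correct, and up to the last step of the converse it follows the same skeleton as the paper: reduction via Corollary~\ref{cor:End-Ea} and Lemma~\ref{Fpsuperj} to the existence of $\psi\in\calR=\gra^{-1}\calR_0\gra$ with $\psi^2=-p$; normalization $\calR_0\in S^\opt$ via Proposition~\ref{prop:Sopt}; the forward direction (your $\psi=\mu j$ with $\gra^{-1}\overline{\gra}=\mu\calO$ is literally the paper's $\alpha=aj$ with $\gra^{-1}\overline{\gra}=\calO a$); and the rigidification $\psi=aj$ with $a\in K$, $a\bar{a}=1$, via Kaneko's inequality exactly as in the proof of Proposition~\ref{prop:Sopt}. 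Where you genuinely diverge is the endgame of the converse. The paper uses neither Hilbert~90 nor two-sided ideals: it notes $\calR\supseteq\gra^{-1}\overline{\gra}j$, multiplies by $\psi$ to get $\calR\supseteq -pa\gra^{-1}\overline{\gra}$, strips the factor $-p$ by a prime-by-prime check (trivial at $\ell\neq p$; at $\ell=p$ via the reduced-norm description of the unique maximal order of $B_p$), and then from $a\gra^{-1}\overline{\gra}\subseteq\calR\cap K=\calO$ gets the containment $a\overline{\gra}\subseteq\gra$, promoted to the equality $a\overline{\gra}=\gra$ by comparing discriminants. You instead write $a=\bar{b}/b$ by Hilbert~90, equate the two descriptions $\calR\psi$ and $\gra^{-1}\calR_0\overline{\gra}j$ of the unique two-sided prime of $\calR$ above $p$, cancel the invertible factors, and intersect with $K$ using the identity $(\calR_0\mathfrak{x})\cap K=\mathfrak{x}$ for invertible $\calO$-ideals $\mathfrak{x}$ (which you justify correctly from optimality plus local principality; the same facts underlie the paper's local computation). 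Your route buys an exact ideal equality in one stroke rather than a containment later upgraded, at the cost of the two-sided-ideal bookkeeping — including the small unstated verifications that $\calR\psi$ and $\gra^{-1}P_0\gra$ are each integral two-sided ideals squaring to $p\calR$, hence both equal the unique prime above $p$ — while the paper's route is a bit shorter and stays entirely at the level of containments of lattices. Your assembly of the regular action at the end matches the paper's.
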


\begin{proof}
  Put $\calR_0: =\End(\wtE_0)$ and $\calR:=\gra^{-1}\calR_0\gra$ 
  so that
  $\End(\widetilde{E_0^\gra})\simeq \calR$ by
  Corollary~\ref{cor:End-Ea}. From Lemma~\ref{Fpsuperj}, it is enough
  to show that there exists $\alpha\in \calR$ with $\alpha^2=-p$ if and only if $[\gra]\in
  \Pic(\calO)[2]$. By Proposition~\ref{prop:Sopt}, we may assume that 
  $\calR_0\in S^\opt$, that is, $\calR_0$ is a
  maximal order in $B$ satisfying  $\calR_0\supseteq
  \calO+j \calO$ and $\calR_0\cap K=\calO$. 
  Then 
  \begin{equation}
    \label{eq:3}
\calR\cap K=\gra^{-1}(\calR_0\cap K)\gra =
\mathcal{O},\quad\text{and}\quad  \calR\supseteq \gra^{-1} j \gra=\gra^{-1}\bar\gra j.   
  \end{equation}

First, suppose that $[\gra]\in \Pic(\calO)[2]$. Then
$\mathfrak{a}^{-1}\bar{\mathfrak{a}} = \calO a$ for some $a\in
K^\times$. Moreover, $N_{K/\QQ}(\calO a) =
N_{K/\QQ}(\gra^{-1}\bar\gra)=\ZZ$, so $N_{K/\QQ}(a)=1$.  Therefore $\alpha: =aj\in
\mathcal{R}$ satisfies that $\alpha^2 = a\bar{a}j^2 = -p$.

Conversely, suppose that $\alpha \in \calR$ is an element satisfying
$\alpha^2=-p$. From the proof of Proposition~\ref{prop:Sopt}, we must have 
$\alpha x= \bar{x}\alpha$ for every $x\in \calO$. Thus $j^{-1}\alpha$ centralizes $\calO$, so  
there exists $a\in K^\times$ such that $\alpha=ja$. Moreover,
$N_{K/\QQ}(a)=1$ since 
$\alpha^2=j^2\bar{a}a$. Now we have 
\begin{equation}
  \label{eq:7}
\calR\supset \gra^{-1}\bar\gra j\cdot \alpha=\gra^{-1}\bar\gra j\cdot ja=-pa\gra^{-1}\bar\gra. 
\end{equation}
We claim that  $\calR\supset  a\gra^{-1}\bar\gra$. It suffices to
show that $\calR_\ell\supset a\gra_\ell^{-1}\bar\gra_\ell$ for every
prime $\ell\in \mathbb{N}$, where the subscript $_\ell$ indicates $\ell$-adic
completion at $\ell$. If $\ell\neq p$, then $(-p)\in
\calR_\ell^\times$, so the containment follows directly from
(\ref{eq:7}). If $\ell=p$, then $\calR_p$ coincides with the unique
maximal order of the division quaternion $\QQ_p$-algebra $B_p$. More concretely, $\calR_p=\{z\in B_p\mid \nrd(z)\in
\ZZ_p\}$, where $\nrd(z)$ denotes the reduced norm of $z\in B_p$.   On the
other hand,  for any $x_p\in \gra_p^{-1}$ and $y_p\in \gra_p$, we have
$x_py_p\in \calO_p$, and hence 
$\nrd(a x_p\bar{y}_p)=\nrd(x_p)\nrd(\bar{y}_p)=\nrd(x_py_p)\in
\ZZ_p$. Since $\gra_p^{-1}\bar\gra_p$ is generated by elements of the
form $x_p\bar{y}_p$, it follows  that $\calR_p\supset
a\gra_p^{-1}\bar\gra_p$. The claim is verified. Now $
a\gra^{-1}\bar\gra\subseteq \calR\cap
K=\calO$, which implies that $
a\bar\gra\subseteq \gra$. Comparing  discriminants on both sides, we get
$\disc(a\bar\gra)=N_{K/\QQ}(a)^2\disc(\bar{\gra})=\disc(\gra)$. Therefore,
$a\bar\gra=\gra$, so $[\gra]\in \Pic(\calO)[2]$. 
\end{proof}

Now we drop the assumption that $\calH_p\neq \emptyset $ and derive a
non-emptiness criterion for $\calH_p$. From
Proposition~\ref{prop:Sopt}, $\calH_p\neq \emptyset$ if and only if
$S^\opt\neq \emptyset$ (as we have already assumed that $\left(
  \frac{-d,-p}{\QQ} \right)\simeq B_{p, \infty}$). For each prime
$\ell\in \mathbb{N}$, let us put
 \[S_\ell^\opt := \{\calR_\ell \subseteq B_\ell\mid\calR_\ell \text{
     is a maximal order containing } \Lambda_\ell \text{ and }
   \mathcal{R}_\ell\cap K_\ell = \mathcal{O}_\ell\}.\]
The local-global correspondence of lattices \cite[Proposition~4.21]{curtis-reiner:1}  establishes a
bijection between 
$S^{\text{opt}}$ and $\prod_\ell
  S_\ell^{\text{opt}}$, where the product runs over all prime $\ell$. 
 Since the reduced discriminant of $B$ is $p$ and the reduced
 discriminant of $\Lambda$ is $p\disc(\calO)$ by \cite[Lemmas~2.7 and
2.9]{LXY}, $\Lambda$ is maximal at every prime $\ell$ coprime to $\disc(\calO)$. Moreover, for each such $\ell$, the maximal order $\Lambda_\ell$
automatically satisfies the condition
$\Lambda_\ell\cap K_\ell = \mathcal{O}_\ell$ by its definition $\Lambda_\ell=\calO_\ell+j\calO_\ell$. 
Hence
for $\ell \nmid \disc(\calO)$, the set $S_\ell^\opt$ has a
single element $\Lambda_\ell$, and the bijection above  simplifies as 
\begin{equation}
  \label{eq:13}
  S^\opt\longleftrightarrow \prod_{\ell|\disc(\calO)}S_\ell^\opt.
\end{equation}

 

\begin{lemma}\label{lem:Sopt-nonempty}
  Let $\ell$ be a prime factor of $\disc(\calO)$. Then
  $S_\ell^\opt \neq \emptyset$ if and only if
  $-p\in N_{K/\QQ}(\calO_\ell^\times)$.  Moreover, if $S_\ell^\opt\neq \emptyset$,
  then there is a regular action of $H^1(K/\QQ,
  \calO_\ell^\times)$ on $S_\ell^\opt$, so any fixed member of
  $S_\ell^\opt$ gives rise to a bijection  $S_\ell^\opt\simeq H^1(K/\QQ,
  \calO_\ell^\times)$. 
\end{lemma}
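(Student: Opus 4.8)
The plan is to work one prime $\ell \mid \disc(\calO)$ at a time, exploiting that $\ell \neq p$ (since $p$ is inert and $p > |\disc(\calO)|$), so that $B_\ell := B\otimes_\QQ \QQ_\ell$ is split and $j$ becomes a unit of every maximal order containing it (as $\nrd(j) = p \in \ZZ_\ell^\times$). First I would record the two consequences of $\calR_\ell \in S_\ell^\opt$ that drive everything: $j$ normalizes $\calR_\ell$, and conjugation by $j$ induces the nontrivial element $\sigma \in \Gal(K/\QQ)$ on $K_\ell$. I would also rewrite the target group via Tate cohomology of the cyclic group $\Gal(K/\QQ) = \{1,\sigma\}$: writing $\calO_\ell^1 := \{u \in \calO_\ell^\times : u\bar u = 1\}$ for the norm-one units, one has $H^1(K/\QQ,\calO_\ell^\times) \cong \calO_\ell^1/\{\bar v/v : v \in \calO_\ell^\times\}$, since $H^1 = \widehat H^{-1} = \ker(N)/\operatorname{im}(\sigma-1)$ for a cyclic group.

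Next I would construct the action. Given $u \in \calO_\ell^1$, Hilbert 90 for $K_\ell/\QQ_\ell$ provides $c \in K_\ell^\times$ with $\bar c/c = u$, and I set $u\cdot\calR_\ell := c\,\calR_\ell\,c^{-1}$. I would check this is well defined (a second choice of $c$ differs by a central element of $\QQ_\ell^\times$) and lands back in $S_\ell^\opt$: conjugation by $c \in K_\ell^\times$ fixes $K_\ell$ pointwise, hence preserves optimality $\calR_\ell \cap K_\ell = \calO_\ell$; and $c^{-1} j c = (c^{-1}\bar c)\,j = u^{-1} j \in \calO_\ell^\times j \subseteq \Lambda$, so $j$, and thus all of $\Lambda$, lies in $c\calR_\ell c^{-1}$. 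The stabilizer computation then gives freeness after descending to $H^1$: the equality $c\calR_\ell c^{-1} = \calR_\ell$ forces $c \in K_\ell^\times \cap N_{B_\ell^\times}(\calR_\ell) = K_\ell^\times \cap \QQ_\ell^\times \calR_\ell^\times$, which equals $\QQ_\ell^\times\calO_\ell^\times$ by optimality ($\calR_\ell^\times \cap K_\ell^\times = \calO_\ell^\times$) together with the fact that the normalizer of a maximal order in the split algebra is $\QQ_\ell^\times\calR_\ell^\times$. Hence the stabilizer of any $\calR_\ell$ inside $\calO_\ell^1$ is exactly $\{\bar v/v : v \in \calO_\ell^\times\}$, so the induced $H^1(K/\QQ,\calO_\ell^\times)$-action is free.

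The crux is transitivity together with the existence criterion. For existence I would first reduce, via Skolem--Noether, to the cleaner statement that $S_\ell^\opt\neq\emptyset$ iff some maximal order contains $\calO_\ell$ optimally and an element $\alpha$ with $\alpha^2=-p$: if such a pair $(\calR_\ell,\alpha)$ exists, then $\alpha = cj$ for some $c\in K_\ell^1$ (as in Proposition~\ref{prop:Sopt}, $\alpha$ must act as $\sigma$ on $\calO_\ell$), and conjugating by the $K_\ell^\times$-element sending $\alpha$ to $j$ (again Hilbert 90) moves $\calR_\ell$ into $S_\ell^\opt$. I would then put the embedding in local normal form over $M_2(\ZZ_\ell)$---an optimal embedding of $\calO_\ell = \ZZ_\ell[\theta]$ together with an integral $J$ satisfying $J^2=-pI$ and $J M_\theta J^{-1}=M_{\bar\theta}$---and read off that the existence of such an integral $J$ swapping the two eigendirections of $M_\theta$ is governed precisely by a norm equation, namely by whether $-p$ lies in $N_{K/\QQ}(\calO_\ell^\times)$.

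For transitivity I would show any two members of $S_\ell^\opt$ are conjugate by an element $c\in K_\ell^\times$ with $\bar c/c\in\calO_\ell^\times$, so that together with the stabilizer computation $S_\ell^\opt$ becomes a single free $H^1$-orbit, i.e. a torsor. Concretely I would fix a base point $\calR_\ell^{(0)}$ and, using that $B_\ell$ has a single local type (all maximal orders are $B_\ell^\times$-conjugate) and that two optimal embeddings of $\calO_\ell$ into maximal orders lie in one orbit under the centralizer $K_\ell^\times$, refine a $B_\ell^\times$-conjugacy carrying $\calR_\ell^{(0)}$ to a given $\calR_\ell$ into a $K_\ell^\times$-conjugacy respecting both the embedded $\calO_\ell$ and the element $j$; the Bruhat--Tits tree for $\mathrm{PGL}_2(\QQ_\ell)$, in which the maximal orders are the vertices and the $j$- and $\calO_\ell$-actions cut out a single orbit, is the natural tool. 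I expect this transitivity step, and the precise matching of the existence obstruction with the unit-norm group $N_{K/\QQ}(\calO_\ell^\times)$ (rather than the a priori weaker field-norm condition $-p\in N_{K/\QQ}(K_\ell^\times)$, which is equivalent merely to $B_\ell$ being split), to be the main obstacle: the split hypothesis on $B$ guarantees $-p\in N_{K/\QQ}(K_\ell^\times)$, and the whole content is that optimality upgrades this to the unit-level statement.
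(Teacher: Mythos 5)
Your setup is fine as far as it goes: the conjugation action of $H^1(K/\QQ,\calO_\ell^\times)$ via Hilbert 90 is well defined, preserves $S_\ell^\opt$, and your stabilizer computation (normalizer of a maximal order in the split algebra is $\QQ_\ell^\times\calR_\ell^\times$, and $K_\ell^\times\cap\QQ_\ell^\times\calR_\ell^\times=\QQ_\ell^\times\calO_\ell^\times$ by optimality) correctly gives freeness; this is the same action the paper obtains, in different language. The genuine gap is in the two steps that carry the actual content — the existence criterion and transitivity — and your reduction for existence rests on a false claim. You assert that if a maximal order $\calR_\ell$ contains $\calO_\ell$ optimally together with some $\alpha$ satisfying $\alpha^2=-p$, then, ``as in Proposition~\ref{prop:Sopt}'', $\alpha$ must induce $\sigma$ on $\calO_\ell$. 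The step of Proposition~\ref{prop:Sopt} that forces $\alpha\calO\alpha^{-1}=\calO$ is Kaneko's inequality $|\disc(\calO)|\geq p$, a statement about optimally embedded orders in the \emph{global, definite} algebra $B_{p,\infty}$; it has no analogue in $B_\ell\simeq M_2(\QQ_\ell)$. Indeed the local claim is false, and so is your ``cleaner statement'': $M_2(\ZZ_\ell)$ contains \emph{every} quadratic $\ZZ_\ell$-order optimally (via the regular representation on $\calO_\ell\simeq\ZZ_\ell^2$) and also contains the companion matrix $\left(\begin{smallmatrix}0&-p\\1&0\end{smallmatrix}\right)$, whose square is $-p$. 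So a pair $(\calR_\ell,\alpha)$ as in your reformulation exists for every $\ell\neq p$, whereas $S_\ell^\opt=\emptyset$ whenever, say, $\ell$ is odd and $\legendre{-p}{\ell}=-1$. The compatibility $\alpha x=\bar{x}\alpha$ (i.e.\ $\calR_\ell\supseteq\Lambda_\ell$, not merely $\calO_\ell$ plus a square root of $-p$) cannot be dropped or recovered for free; it must be carried through the whole argument.

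Even after correcting the reformulation, the two steps you yourself flag as the main obstacles — ``reading off'' that the existence of an integral $J$ with $J^2=-p$ and $JM_\theta J^{-1}=M_{\bar\theta}$ is equivalent to $-p\in N_{K/\QQ}(\calO_\ell^\times)$, and the tree-based proof of transitivity — are left as assertions, and they are precisely what the lemma says. The structural input you are missing, which the paper uses to do both at once, is this: if $L_\ell\subset\QQ_\ell^2$ is a lattice with $\End_{\ZZ_\ell}(L_\ell)\cap K_\ell=\calO_\ell$, then $L_\ell$ is a \emph{free $\calO_\ell$-module of rank one}, because $\calO_\ell$ is a Gorenstein semilocal ring \cite[Characterization B 4.2]{gorenstein}. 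Writing $L_\ell=\calO_\ell e$, the $\Lambda_\ell$-structure is recorded entirely by $je=ae$, which forces $a\bar{a}=-p$ and $a\in\calO_\ell^\times$ (as $\ell\neq p$), and a change of basis $e\mapsto ue$ replaces $a$ by $(\bar{u}/u)a$. The resulting bijection
\[
S_\ell^\opt \;\longleftrightarrow\; \{a\in\calO_\ell^\times \mid a\bar{a}=-p\}/\!\sim
\]
delivers the nonemptiness criterion, transitivity, and the torsor structure simultaneously, uniformly in $\ell$ (including $\ell$ ramified or split in $K$, and $\ell$ dividing the conductor, where an explicit matrix normal form or a Bruhat--Tits argument would require separate case analysis). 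Without this freeness fact, or an explicit computation replacing it, your outline proves only the freeness of the action, not the lemma.
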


The Galois cohomological description of $S_\ell^\opt$ is  nice to
know but not used elsewhere
in this paper. 

\begin{proof}
  By our assumption, $\disc(\calO)$ is coprime to $p$, so $B$ splits
  at the prime $\ell$. This allows us to  identify $B_\ell$ with the matrix 
  algebra $M_2(\QQ_\ell)$.  Let $V_\ell=\QQ_\ell^2$ be the unique
  simple $B_\ell$-module. 
  Every maximal order $\calR_\ell$ in
  $B_\ell$ is of the form $\End_{\ZZ_\ell}(L_\ell)$ for some
  $\ZZ_\ell$-lattice $L_\ell\subseteq V_\ell$, and $L_\ell$
  is uniquely determined by $\mathcal{R}_\ell$ up to $\QQ_\ell^\times$-homothety. In
  other words, $\End_{\ZZ_\ell}(L_\ell) = \End_{\ZZ_\ell}(L'_\ell)$ if
  and only if $L_\ell =c L'_\ell$ for some
  $c \in \QQ_\ell^\times$.  If $\calR_\ell \in S_\ell^\opt$,
  then the inclusion
  $\Lambda_\ell \subseteq \mathcal{R}_\ell$ puts a
  $\Lambda_\ell$-module structure on   $L_\ell$. Moreover, the $\Lambda_\ell$-lattice $L_\ell$  is
  \emph{$\calO_\ell$-optimal} in the sense that
  $\End_{\ZZ_\ell}(L_\ell)\cap K_\ell =
  \mathcal{O}_\ell$.  Conversely, if $M_\ell$ is an  $\calO_\ell$-optimal
  $\Lambda_\ell$-lattice  in $V_\ell$, then
  $\End_{\ZZ_\ell}(M_\ell)$ is a member of 
  $S_\ell^\opt$. We have established the following  canonical bijection
  \begin{equation}
    \label{eq:11}
    \mathcal{S}_\ell^\text{opt} \longleftrightarrow
    \mathcal{M}:=\{\mathcal{O}_\ell\text{-optimal } \Lambda_\ell\text{-lattices } L_\ell\subset V_\ell \}/\QQ_\ell^\times.
  \end{equation}

Recall that $\Lambda_\ell=\calO_\ell+j\calO_\ell$, where $j^2 = -p$ and $jx = \bar{x}j$ for
  any $x\in \mathcal{O}_\ell$.  If there exists $a\in \calO_\ell^\times$ satisfying
$a\bar{a}=-p$, then we can put a $\Lambda_\ell$-module structure on
$\calO_\ell$ as follows: 
\[ (x+jy)\cdot z= xz+\bar{y}\bar{z}a,  \qquad\forall x, y, z\in \calO_\ell. \]
Since $B_\ell=\Lambda_\ell\otimes_{\ZZ_\ell}\QQ_\ell$, this also puts
a $B_\ell$-module structure on $K_\ell=\calO_\ell\otimes_{\ZZ_\ell}\QQ_\ell$. Consequently, it identifies $K_\ell$
with the unique simple $B_\ell$-module $V_\ell$, and in turn
identifies $\calO_\ell$  with  a $\Lambda_\ell$-lattice $L_\ell$ in
$V_\ell$. Necessarily, $L_\ell$ is $\calO_\ell$-optimal since
$\End_{\ZZ_\ell}(L_\ell)\cap
K_\ell=\End_{\calO_\ell}(L_\ell)=\End_{\calO_\ell}(\calO_\ell)=\calO_\ell$.
We have shown that if $-p\in N_{K/\QQ}(\calO_\ell^\times)$, then
$S_\ell^\opt\neq \emptyset$.
  
Conversely,  suppose that $S_\ell^\opt\neq \emptyset$ and let $M_\ell$ be an $\calO_\ell$-optimal $\Lambda_\ell$-lattice
  in $V_\ell$. The inclusion $\calO_\ell\subset \Lambda_\ell$
  equips $M_\ell$ with an $\calO_\ell$-module structure satisfying
  $\End_{\calO_\ell}(M_\ell)=\calO_\ell$. Being a quadratic
  $\ZZ_\ell$-order, $\calO_\ell$ is both Gorenstein and semi-local. It
  follows from \cite[Characterization B 4.2]{gorenstein} that $M_\ell$
  is a free $\mathcal{O}_\ell$-module of rank one. Pick  a basis $e$
  so that $M_\ell = \mathcal{O}_\ell e$.  Since $M_\ell$ is at the
  same time a module over $\Lambda_\ell$, we
  have 
 $je = ae$ for some
  $a\in \mathcal{O}_\ell$. Necessarily, $\bar{a}a=-p$ because 
 \[-pe = j^2e = j(je) = j(ae) = \bar{a}je = \bar{a}ae. \]
This also implies that $a\in \mathcal{O}_\ell^\times$ since $\ell\neq p$.   Therefore, $S_\ell^\text{opt}
\neq \emptyset$ if and only if  $-p\in N_{K/\QQ}(\calO_\ell^\times)$.

 Had we picked a different basis $e'$ for $M_\ell$, then $e'=ue$
  for some $u\in
  \mathcal{O}^\times_\ell$. It follows that 
  \[j e'=j(ue)=\bar{u}je=\bar{u}ae=u^{-1}\bar{u}ae'.\] 
Correspondingly, $a$ is changed to $u^{-1}\bar{u}a$. Therefore,
  we have defined the following map:
  \begin{equation}
    \label{eq:12}
\Phi: \mathcal{M} \rightarrow \{a\in \calO_\ell^\times\mid a\bar{a} =
-p\}/{\sim},
  \end{equation}
where $a\sim a'$ if and only if there exists some
  $u\in \mathcal{O}_\ell^\times$ such that $a'=a(\bar{u}/u)$. We have
  already seen that $\Phi$ is surjective. Suppose that 
  $\Phi([M_1])=\Phi([M_2])$ for $[M_r]\in \calM$ with $r=1, 2$.  By the
  above discussion, we can choose suitable $\calO_\ell$-base $e_r$
  for $M_r$ such that they give rise to the same $a\in
  \calO_\ell^\times$. Then the $\calO_\ell$-linear map sending $e_1$ to $e_2$
  defines a $\Lambda_\ell$-isomorphism between $M_1$ and
  $M_2$. Since $\Aut_{B_\ell}(V_\ell)=\QQ_\ell^\times$,  it follows
  that  $M_1$ and
  $M_2$ are $\QQ_\ell^\times$-homothetic, so   $\Phi$ is injective as
  well.

  Lastly, if the right hand side of (\ref{eq:12}) is nonempty, then it
  admits a regular action by   $H^1(K/\QQ,\calO_\ell^\times)=\{b\in
  \calO_\ell^\times \mid \bar{b}b=1\}/{\sim}$ via multiplication.  The
  second part of  the lemma follows by combining the bijections
  (\ref{eq:11}) and (\ref{eq:12}) with the above action.  
\end{proof}












\begin{lemma}\label{lem:-p=norm}
Let $\ell$ be a prime factor of $\disc(\calO)$.  Then $-p\in N_{K/\QQ}(\calO_\ell^\times)$ if and only if either condition (i) or (ii) below holds for $\ell$ depending on  its parity:
\begin{enumerate}
    \item[(i)] $\ell \neq 2$ and $\legendre{-p}{\ell} = 1$;
    \item[(ii)] $\ell = 2$ and one of the following conditions holds:
        \subitem (a) $p \equiv 7 \amod{8}$;
        \subitem (b) $-p + \frac{\disc(\calO)}{4} \equiv 0,\,1 $ or $4 \amod{8}$;
        \subitem (c) $-p + \disc(\calO)\equiv 1 \amod{8}$.
\end{enumerate}
\end{lemma}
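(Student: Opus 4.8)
The plan is to convert the assertion into a local representability question for the norm form of $\calO$ and then settle it one prime at a time. First I would note that $-p\in\ZZ_\ell^\times$ because $\ell\nmid p$; hence any $\alpha\in\calO_\ell$ with $N_{K/\QQ}(\alpha)=\alpha\bar\alpha=-p$ is automatically a unit of $\calO_\ell$, since $\bar\alpha\in\calO_\ell$ and $\alpha\cdot(\bar\alpha/(-p))=1$. Thus $-p\in N_{K/\QQ}(\calO_\ell^\times)$ if and only if the integral binary quadratic form $q_\calO$ representing $N_{K/\QQ}$ on $\calO$ represents $-p$ over $\ZZ_\ell$. Writing $D:=\disc(\calO)$, this form is $q_\calO(a,b)=a^2+ab+\frac{1-D}{4}b^2$ if $D\equiv1\amod{4}$ and $q_\calO(a,b)=a^2-\frac{D}{4}b^2$ if $D\equiv0\amod{4}$, and in both cases $\disc(q_\calO)=D$.

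For an odd prime $\ell\mid D$ the plan is to diagonalize. Completing the square is legitimate over $\ZZ_\ell$ because $2\in\ZZ_\ell^\times$, and it brings $q_\calO$ into the shape $x^2-\frac{D}{4}y^2$ with $v_\ell(\frac{D}{4})=v_\ell(D)\ge1$. Reducing the equation $x^2-\frac{D}{4}y^2=-p$ modulo $\ell$ then collapses it to $x^2\equiv-p\amod{\ell}$, so any solution forces $\legendre{-p}{\ell}=1$; conversely, when $\legendre{-p}{\ell}=1$ a root $x_0$ of $x^2\equiv-p\amod{\ell}$ is a unit and lifts by Hensel's lemma to $x\in\ZZ_\ell$ with $x^2=-p$, giving the representation $q_\calO(x,0)=-p$. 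This is exactly condition (i).

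The prime $\ell=2$ is the crux. Here $2\mid D$ forces $D\equiv0\amod{4}$, so $m:=D/4\in\ZZ$ and $q_\calO(a,b)=a^2-mb^2$ on the nose. The engine is the elementary fact that the odd squares in $\ZZ_2$ are precisely $1+8\ZZ_2$, which reduces solvability of $x^2-my^2=-p$ to a congruence modulo $8$ that is then lifted. When $m$ is even, the odd value $-p$ forces $x$ odd, so $x^2\in1+8\ZZ_2$ and the equation $x^2=-p+my^2$ is solvable iff some $y$ satisfies $my^2\equiv1+p\amod{8}$; since $my^2\amod{8}\in\{0,m\}$ for even $m$, this happens exactly when $p\equiv7\amod{8}$ or $m-p\equiv1\amod{8}$. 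When $m$ is odd, exactly one of $x,y$ is odd, and running the same Hensel-lifting argument through the two parity branches shows that $-p$ is represented precisely when $-p\amod{8}$ lies in $\{1,5,-m,4-m\}$, i.e.\ when $p\equiv3,7\amod{8}$ or $m-p\equiv0,4\amod{8}$.

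The remaining step is bookkeeping: I would check that the disjunctions just obtained coincide, for each parity of $m$, with ``(a) or (b) or (c)''. The guiding observations are that $\{0,1,4\}$ are exactly the squares modulo $8$ (the residues allowed in (b)); that $m-p$ has parity opposite to $m$, so the even residues $0,4$ of (b) are attainable only when $m$ is odd while the odd residue $1$ is attainable only when $m$ is even; and that condition (c) reads $p\equiv3\amod{8}$ when $m$ is odd but collapses to condition (a) when $m$ is even. Matching these against the two computations above yields the claimed equivalence. The main obstacle throughout is arranging the $\ell=2$ analysis so that the single clause (b) with residues $\{0,1,4\}$ simultaneously captures both parities of $m$; once the odd-square description of $1+8\ZZ_2$ is in hand, the rest is a routine Hensel-and-mod-$8$ computation.
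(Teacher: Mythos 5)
Your proof is correct and takes essentially the same route as the paper's: both reduce $-p\in N_{K/\QQ}(\calO_\ell^\times)$ to $\ZZ_\ell$-solvability of $x^2-\frac{D}{4}y^2=-p$ (noting any solution is automatically a unit since $-p\in\ZZ_\ell^\times$), settle odd $\ell$ by reduction modulo $\ell$ plus Hensel lifting, and settle $\ell=2$ by a mod-$8$ analysis lifted $2$-adically. The only difference is organizational: the paper enumerates the five admissible residue pairs $(x^2,y^2)\bmod 8$ and lifts via a multivariate Hensel lemma, whereas you split on the parity of $m=D/4$ and invoke the characterization of odd $2$-adic squares as $1+8\ZZ_2$, at the cost of the extra bookkeeping matching your disjunctions with (a)--(c) — all of which checks out.
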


\begin{proof} 
 For simplicity, put
  $D:=\disc(\calO)$ and $\delta=\frac{1}{2}\sqrt{D}$. We claim that
  $\calO_\ell=\ZZ_\ell+\ZZ_\ell\delta$. It is well known that
  $\calO=\ZZ+\ZZ(D+\sqrt{D})/2$. The claim is obviously
  true if $4|D$. If $4\nmid D$, then $\ell\neq 2$, so the claim
  is true in this case as well. Given an element
  $a+b\delta\in \calO_\ell$ with $a, b\in \ZZ_\ell$, we have
  $N_{K/\QQ}(a+b\delta)=a^2-b^2D/4$. Therefore, $-p \in N_{K/\QQ}(\mathcal{O}_\ell^\times)$ if and only if the equation 
    \begin{equation}\label{eq:norm_eq}
         x^2 - y^2\frac{D}{4} = -p
    \end{equation}
   has a solution in $\ZZ_\ell^2$.

First, suppose that $\ell$ is odd. Then equation (\ref{eq:norm_eq}) is
solvable in $\ZZ_\ell^2$ if  and only if $\legendre{-p}{\ell} =
1$. Indeed, suppose  $\legendre{-p}{\ell} = 1$ so that $-p$ is a square in
$\mathbb{F}_\ell$. By Hensel's lemma \cite[Lemma~12.2.17]{voight}, the equation $x^2 = -p$ has a solution
$x_0\in \ZZ_\ell$. Hence $(x_0,0)$ is a solution
of (\ref{eq:norm_eq}) in $\ZZ_\ell^2$. Conversely, suppose
(\ref{eq:norm_eq}) has a solution $(x_0,y_0)\in 
\ZZ_\ell^2$. Reducing (\ref{eq:norm_eq}) modulo $\ell$ shows that
$x_0\amod{\ell}$ is a square root of $-p$ in $\mathbb{F}_\ell$, i.e.~$\legendre{-p}{\ell} = 1$.

For the rest of the proof we assume that $\ell=2$, which implies that
$4|D$. First, suppose that $(x, y)\in \ZZ_2^2$ is a solution of
(\ref{eq:norm_eq}). Since $x^2,\,y^2 \equiv 0,\,1 \mbox{ or } 4 \amod{8}$ and at least one
of $x,\,y$ lies in $\ZZ_2^\times$ because $p$ is odd, we see that the 
pair $(x^2, y^2)$ takes on five possibilities modulo $8$: 
\[(x^2,y^2) \equiv (0,1),\,(1,0),\,(1,1),\,(1,4) \mbox{ and }
  (4,1)\pmod{8}.\] Each possibility  puts the following respective constraint on
$p$ and $D$: 
        \begin{align*}
          -p + \frac{D}{4} &\equiv 0 \amod{8}, &  -p &\equiv 1
                                                     \amod{8}, &
                                                                 -p +
                                                                 \frac{D}{4}
          &\equiv 1\amod{8}, \\
                            -p + D &\equiv 1\amod{8}, &
          -p + \frac{D}{4} &\equiv 4\amod{8}. & & 
        \end{align*}
We have proved the necessity part of the lemma for the case $\ell=2$. 

Conversely, let us show that the above congruence conditions  are
also sufficient.  From the  discussion above, each of these conditions
guarantees the  existence of a solution $(\tilde{x}, \tilde{y}) $ of
equation (\ref{eq:norm_eq}) in $(\ZZ/8\ZZ)^2$ such that 
 either $\tilde{x}$ or
$\tilde{y}D/4$ lies in $(\ZZ/8\ZZ)^\times$. Now from a multivariate
version of Hensel's lemma \cite[Lemma~12.2.8]{voight}, the pair
$(\tilde{x}, \tilde{y})$  lifts
to a solution  of (\ref{eq:norm_eq}) in $\ZZ_2^2$. The
sufficiency is proved.

  Therefore, 
$-p \in N_{K/\QQ}(\calO_2^\times)$ if and only if one of the following
conditions holds:
    \begin{enumerate}
        \item[(a)] $p \equiv 7 \amod{8}$;
        \item[(b)] $-p + \frac{\disc(\calO)}{4} \equiv 0,\,1 $ or $4 \amod{8}$;
        \item[(c)] $-p + \disc(\calO)\equiv 1 \amod{8}$.
    \end{enumerate}
\end{proof}
\begin{proof}[Proof of Theorem~\ref{thm:main_1}]
  If $\calH_p\neq \emptyset$, then there is a regular action of
  $\Pic(\calO)[2]$ on $\calH_p$ by Proposition~\ref{prop:picO2}.  The
  criterion for the nonemptiness of $\calH_p$ follows from combining
  Proposition~\ref{prop:Sopt} with equation~(\ref{eq:13}) and
  Lemmas~\ref{lem:isom-B-p-infty}, \ref{lem:Sopt-nonempty} and
  \ref{lem:-p=norm}. 
\end{proof}
\bibliographystyle{amsalpha}
\bibliography{refs} 

\end{document}